\documentclass[10pt,a4paper]{article}

\usepackage[utf8]{inputenc}
\usepackage[T1]{fontenc}
\usepackage[english]{babel}
\usepackage[left=2.00cm,right=2.00cm,top=2.00cm,bottom=2.00cm]{geometry}
\usepackage{multicol}
\usepackage{amsmath}
\usepackage{amsfonts}
\usepackage{amssymb}
\usepackage{amsthm}
\usepackage{mathtools}
\usepackage{graphicx}
\usepackage[percent]{overpic}
\usepackage{xcolor}
\usepackage{caption}
\usepackage{subcaption}
\usepackage{booktabs}
\usepackage{array}
\usepackage{tabularx}
\usepackage{float}
\usepackage{xurl}

\definecolor{azul}{rgb}{0.0,0.53,0.74}

\usepackage[
  colorlinks=true,
  linkcolor=azul,
  citecolor=azul!80!black,
  urlcolor=azul
]{hyperref}

\usepackage[numbers,sort&compress]{natbib}

\raggedcolumns

\theoremstyle{plain}
\newtheorem{theorem}{Theorem}[section]

\newtheorem{proposition}[theorem]{Proposition}

\theoremstyle{definition}

\newcommand{\tapsrepo}{\href{https://github.com/liminae/TAPS/tree/TAPS}{\texttt{github.com/liminae/TAPS}}}
\newcommand{\repro}[2]{\space\textit{Reproduction:} \href{https://github.com/liminae/TAPS/blob/TAPS/notebooks/#1}{\texttt{#1}, Cell~#2}.}
\newcommand{\repros}[2]{\space\textit{Reproduction:} \href{https://github.com/liminae/TAPS/blob/TAPS/notebooks/#1}{\texttt{#1}, Cells~#2}.}

\newcommand{\norm}[1]{\left\lVert #1 \right\rVert}
\newcommand{\blueline}{
  \begin{center}
  \textcolor{azul}{\rule{150mm}{0.5mm}}
  \end{center}
}

\begin{document}

\vspace{5mm}

\begin{center}
{\Large\textbf{TAPS: Target-Aware Permanent Sampling for Graph Diffusion}}\\[5mm]
{\normalsize Michelle Lin$^{1}$ \quad Laura P.\ Schaposnik$^{2,3,4}$}\\[3mm]
$^{1}${Thomas Jefferson High School for Science and Technology}, VA, USA\\[1mm]
$^{2}$MSCS, LQuTE, AIRRSHIP Lab, 
University of Illinois Chicago, USA\\[1pt]
$^{3}$NSF-Simons National Institute for Theory and Mathematics in
Biology, Chicago, IL, USA\\[1pt]
 
$^{4}$Mathematical Institute and Magdalen College,
University of Oxford, UK\\[4pt]
\end{center}

\blueline

\begin{abstract}
Permanent air-quality networks are expensive to install and maintain, yet many decisions depend on one future regional exposure rather than the complete pollution field. We introduce \textit{Target-Aware Permanent Sampling (TAPS)}, a graph-diffusion framework for selecting permanent locations observed repeatedly over time,  that can reduce unnecessary sensor installations, maintenance, and cost while preserving the information needed for future air-quality decisions, subject to validation with the estimator that will use the network. We formulate the permanent space-time sampling problem and show that, in the noiseless model, recovery of one prescribed target can require less information than full-state identification, for which we give an explicit lower bound on the permanent-location count. We also derive a greedy rule whose marginal gain factors into raw target response and a finite-update correction.  We evaluate TAPS on regulatory air-quality networks in California, Canada, and England. In each case the prescribed regional target becomes numerically recoverable at permanent-location budgets well below those required to identify the retained state, and TAPS attains lower regularized target risk than target-weight, geometric, and design-based placements. A blind prospective study further shows that the criterion can be applied before candidate sites have any ground-monitor history, using exogenous environmental covariates alone. 

\vspace{2mm}
\noindent\textbf{Keywords:}
\textit{graph sampling, graph diffusion, permanent sensor placement,
target-aware sampling, air-quality monitoring, optimal experimental design.}
\end{abstract}

\blueline
\vspace{4mm}

\begin{multicols}{2}

\section{Introduction}
\label{sec:intro}

Wildfire smoke creates a public-health burden whose spatial and temporal variation is difficult to resolve with a finite permanent monitoring network. Across the contiguous United States, wildfire PM$_{2.5}$ exposure was estimated to cause 164,000 deaths from 2006 to 2020 \cite{law2025wildfire}; in California, exposure from 2008 to 2018 was estimated to cause up to 55,710 premature deaths and \$456 billion in economic damages \cite{connolly2024mortality}. Forest-fire models have long been studied in statistical mechanics as idealized systems for self-organized criticality \cite{bak1990forest,drossel1992forest,clar1996forest}. Some observed fire frequency--area distributions show approximate power-law behavior \cite{malamud1998forest}, although later analyses question simple scaling in the Drossel--Schwabl model \cite{pruessner2002broken}. These results concern fire sizes and dynamics, not the distribution of ground-level PM$_{2.5}$ or atmospheric smoke transport. Our question concerns monitoring rather than fire spread: how many permanent locations are needed for one future regional exposure rather than the complete pollution field? 

Most PM$_{2.5}$ models estimate concentrations after an observation network is available \cite{power2024comparison}. Sensor-placement methods instead choose observations for objectives such as field reconstruction, spatial coverage, or demographic priorities \cite{zhou2022pmplacement,kelp2023placement,YS26}. On graphs, bandlimited, QR, and dynamical sampling seek recovery of a retained signal class \cite{anis2016sampling,manohar2018sparse,aldroubi2013dynamical,huang2025spacetime}; graphical, $c$-optimal, goal-oriented, and task-oriented designs target a prescribed functional or downstream task \cite{babecki2023graphical,althani2024sparse,pukelsheim2006optimal,attia2018goal,madhavan2026control}. Functional observability likewise studies recovery of selected state quantities without requiring full-state observability, including the associated sensor-placement problem \cite{montanari2022functional,zhang2025functional}. Permanent monitoring adds a distinct cost structure: one installed location can be observed repeatedly, whereas every additional location requires new infrastructure, see  \cite{epaSensorGuide2022}.

\begin{center}
\includegraphics[width=1\linewidth]{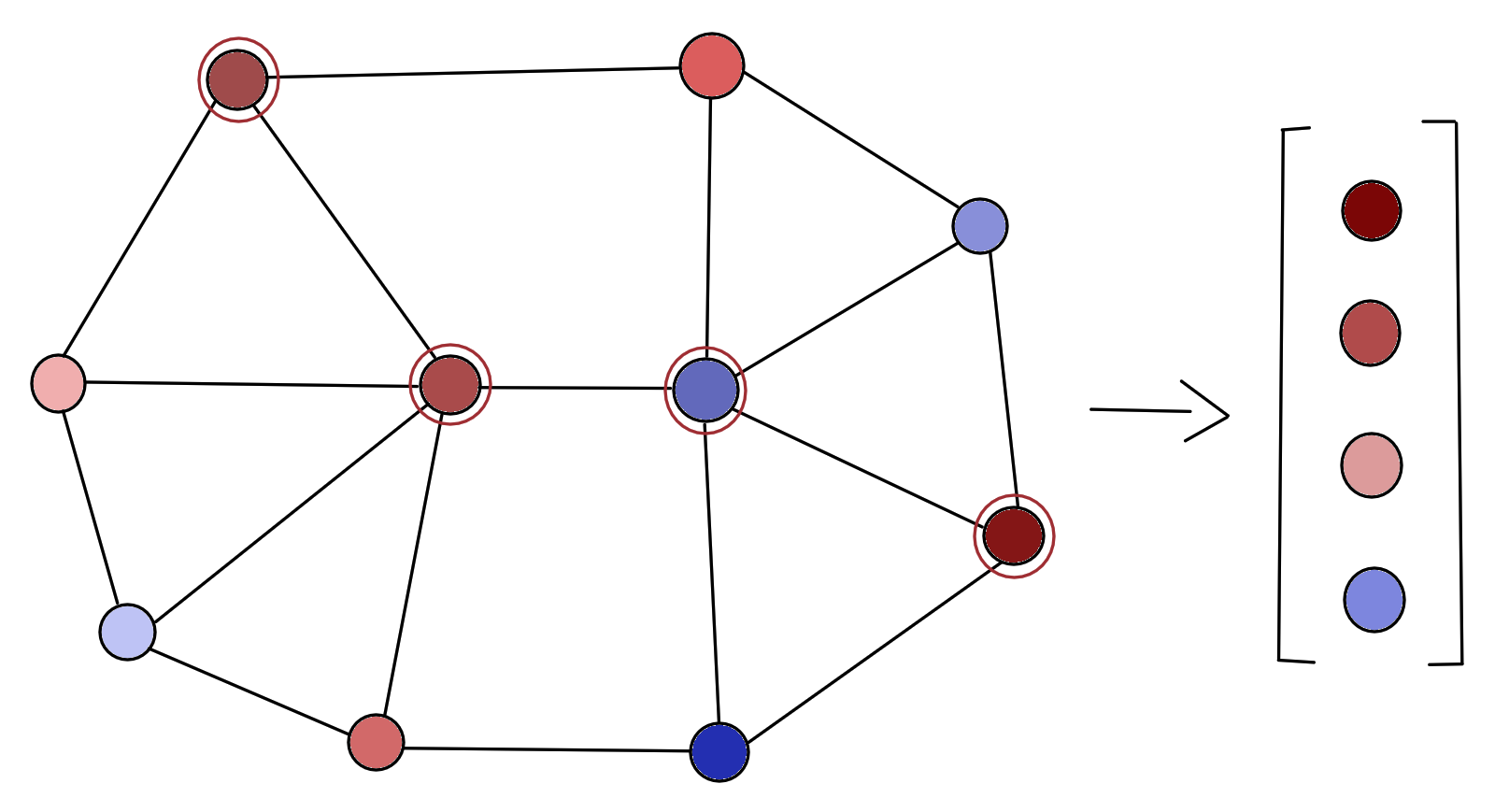}
\captionof{figure}{Schematic of TAPS. Red rings denote the installed set $S$. At one observation time, the installed locations produce the measurement vector shown at right; repeated observation times stack these measurements into the blocks $R_v$. TAPS selects permanent locations for one prescribed future target rather than for reconstruction of the complete retained state.}
\label{fig:graph-sampling}
\end{center}
\pagebreak
In this work we introduce Target-Aware Permanent Sampling (TAPS) for this permanent space-time problem, summarized in  Figure~\ref{fig:graph-sampling} above. The central idea of TAPS is that a location is valuable not only when it responds strongly to the target, but when it contributes target information not already supplied by the installed network. Graph diffusion represents each selected vertex by a structured block of repeated measurements, allowing one future functional to be compared directly with the stronger objective of full-state identification.

We note at the outset that the selection criterion itself is the grouped, permanent-location analogue of greedy $c$-optimal design \cite{pukelsheim2006optimal}: under the same criterion and tie-breaking the two make identical selections. The contribution is the permanent space-time formulation in which each decision commits to an entire repeated-observation block at the cost of one physical installation, together with the target-versus-state comparison, the marginal-gain factorization, and prospective placement without candidate-site histories that this formulation makes available. 

Our main results can be summarized as follows:
\begin{itemize}
    \item First, the target can be recovered before the retained state is identifiable: ten California locations recover the Bay Area target with rank $28<40$, while in England the South East target is numerically exact at ten locations and the TAPS sequence first reaches full-state rank at 22 (Figure~\ref{fig:risk-conditioning} and Section~\ref{sec:transfer}).
    \item Second, under the tested rescaled observation schedules, increasing readings per location from one to five lowers the California greedy exact-target budget from 33 locations to seven, and including the finite-update correction changes six of the ten selected locations (Figures~\ref{fig:observation-count} and~\ref{fig:structural-results}).
    \item  Third, TAPS can operate before candidate sites have ground-monitor PM$_{2.5}$ histories. Across 100 blind prospective California histories, it has the lowest tested ten-location $r_\beta$ in 92; target-weight and raw-response placement require median budgets of 15 and 17 locations to match TAPS-10, so TAPS uses 33.3\% and 41.2\% fewer permanent monitors (Figure~\ref{fig:blind-prospective}). 
    \item Finally, held-out experiments show that model-risk and forecast-error rankings are not identical: ten-location TAPS is competitive with all tested alternatives through 30 locations, but the post hoc best-competitor comparison is unresolved, and at 30 scheduled readings graph diffusion favors $30\times1$ whereas random forest favors $10\times3$ (Figure~\ref{fig:heldout-budget-curves} and Table~\ref{tab:equal-reading-budget}). 
\end{itemize} Together, the theory and experiments show that TAPS can reduce the permanent infrastructure needed for a prescribed future quantity, while the final network must still be tested with the estimator that will use it.

\section{Mathematical Framework}
\label{sec:background}
\label{sec:model}

We formulate TAPS as a permanent space-time sampling problem. The graph, target, prediction horizon, observation schedule, and candidate locations are fixed before future measurements are observed. Selecting one vertex adds its complete block of repeated measurements, but incurs the cost of one physical installation.

\subsection{Graph Diffusion and Measurements}

Let $G=(V,E,\omega)$ be a finite, undirected graph with $n=|V|$ vertices and edge weights $\omega_{uv}>0$ for $\{u,v\}\in E$, and assume $G$ is connected. Let $W$ be the weighted adjacency matrix, $W_{uv}=\omega_{uv}$ if $\{u,v\}\in E$ and $W_{uv}=0$ otherwise, and let $D$ be the diagonal matrix of weighted degrees, $D_{vv}=\sum_u \omega_{uv}$. We work throughout with the unnormalized weighted Laplacian $L=D-W$. Since $L$ is real and symmetric, it admits an orthogonal eigendecomposition
\begin{eqnarray}
      L = U\Lambda U^{T},
  \qquad
  \Lambda := \operatorname{diag}(\lambda_0,\ldots,\lambda_{n-1}),\label{eig}
\end{eqnarray}
where $U$ is orthogonal and its columns $\phi_0,\ldots,\phi_{n-1}$ form an orthonormal graph-Fourier basis. For a vertex $v$, we write $\phi_\ell(v)$ for the entry of $\phi_\ell$ at $v$. Since the weights are nonnegative, $L$ is positive semidefinite, and since $G$ is connected the eigenvalues, ordered increasingly, satisfy
\[
  0 = \lambda_0 < \lambda_1 \leq \cdots \leq \lambda_{n-1}.
\] Small eigenvalues correspond to graph-smooth spatial modes \cite{chung1997spectral}.

For a retained bandwidth $K$, write
\begin{eqnarray}
    U_K&:=&
\begin{bmatrix}
\phi_0&\cdots&\phi_{K-1}
\end{bmatrix},\nonumber \\
\Lambda_K&:=&\operatorname{diag}(\lambda_0,\ldots,\lambda_{K-1}). \label{uk}
\end{eqnarray}
A retained initial state $x_0=U_K\alpha$ evolves under graph diffusion as
\begin{eqnarray}
    x(t)=e^{-tL}x_0=U_Ke^{-t\Lambda_K}\alpha,
\qquad
\alpha\in\mathbb{R}^K.\label{diff}
\end{eqnarray}
Higher graph frequencies decay more rapidly, while the constant mode is preserved.

Fix a prediction horizon $\tau>0$, the future time at which the target of Section~\ref{sec:target} is evaluated, and observation times $\mathcal{T}=\{t_1,\ldots,t_r\}$ with $0\leq t_1<\cdots<t_r<\tau$, so that $r=|\mathcal{T}|$ is the number of readings per installed location. The repeated measurements contributed by a permanent location $v$ are represented in the retained coordinates by the block
\begin{eqnarray}
R_v(j,\ell)=e^{-t_j\lambda_\ell}\phi_\ell(v),
\qquad
\substack{j=1,\ldots,r,\\
\ell=0,\ldots,K-1.}
\label{rv}
\end{eqnarray}
For a permanent set $S=\{v_1,\ldots,v_m\}$, define
\begin{eqnarray}
B_S:=
\begin{bmatrix}
R_{v_1}\\
\vdots\\
R_{v_m}
\end{bmatrix}
\in\mathbb{R}^{rm\times K},
\qquad
y=B_S\alpha+\varepsilon,
    \label{bs}
\end{eqnarray}
where $\varepsilon$ denotes measurement noise, for $B_S$ as in \eqref{bs}. In particular,  $m$ installations provide $rm$ scalar measurements, but the physical-location cost remains $m$. Here $\beta>0$, introduced in Section~\ref{sec:stable}, is a regularization parameter rather than an assumed noise variance; exact recovery below is explicitly noiseless. For zero-mean noise with $\operatorname{Cov}(\varepsilon)=\sigma^2 I$, $J_{\sigma^2}(S)$ of \eqref{jb} is the minimum worst-case mean squared error among linear estimators over $\|\alpha\|_2\leq1$. With correlated errors, $\operatorname{Cov}(\varepsilon)=\Sigma$, the variance term becomes $w^T\Sigma w$, so the effect on temporal reuse depends on the covariance and estimator. 

This construction places TAPS between two neighboring sampling objectives. Bandlimited graph sampling seeks enough measurements to reconstruct every $\alpha\in\mathbb{R}^K$, and therefore requires full column rank \cite{anis2016sampling}. Graphical designs reproduce one prescribed functional on a spectral space \cite{babecki2023graphical,althani2024sparse}. TAPS retains this target-versus-state distinction while requiring every decision to select an entire repeated-observation block $R_v$.

\subsection{Target Recovery and State Identification}
\label{sec:target}

Let
$
c\in\mathbb{R}^n,$
$
c\geq0,
$ and $
\mathbf{1}^Tc=1,
$
where $\mathbf{1}$ is the all-ones vector, so that $c$ is a probability weighting of the vertices. With $\tau$ the prediction horizon fixed above, the target functional is
\begin{eqnarray}
    F_{\tau,c}(x_0)
:=c^Tx(\tau)
=g_{\tau,c}^T\alpha,
\qquad
g_{\tau,c}:=e^{-\tau\Lambda_K}U_K^Tc. \label{gt}
\end{eqnarray}
We first distinguish the information needed for this one direction from the information needed for every retained direction.

\begin{proposition}[Exact target recovery]
\label{prop:exact}
In the noiseless setting, a linear estimator $\widehat F=w^Ty$ recovers $F_{\tau,c}(x_0)$ for every $\alpha\in\mathbb{R}^K$ if and only if
\[
g_{\tau,c}\in\operatorname{range}(B_S^T),
\]
for $g_{\tau,c}$ as in \eqref{gt}. 
Equivalently, exact recovery holds if and only if there exists $w\in\mathbb{R}^{r|S|}$ such that
\[
B_S^Tw=g_{\tau,c}.
\]
\end{proposition}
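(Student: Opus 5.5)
The plan is to reduce exact recovery to an identity between two linear functionals on $\mathbb{R}^K$, and then read that identity as a range condition on $B_S^T$. In the noiseless setting $\varepsilon=0$, so $y=B_S\alpha$ by \eqref{bs}. For a fixed weight vector $w\in\mathbb{R}^{r|S|}$ the estimator is
\[
\widehat F=w^Ty=w^TB_S\alpha=(B_S^Tw)^T\alpha .
\]
By \eqref{gt}, the target is $F_{\tau,c}(x_0)=g_{\tau,c}^T\alpha$. Exact recovery for every $\alpha$ therefore means
\[
(B_S^Tw-g_{\tau,c})^T\alpha=0\quad\text{for all }\alpha\in\mathbb{R}^K .
\]

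For the forward direction, take $\alpha$ to be each standard basis vector $e_\ell$, $\ell=0,\ldots,K-1$, in turn. This shows that every coordinate of $B_S^Tw-g_{\tau,c}$ vanishes, so $B_S^Tw=g_{\tau,c}$ and hence $g_{\tau,c}\in\operatorname{range}(B_S^T)$. An equivalent route is to take $\alpha=B_S^Tw-g_{\tau,c}$ itself, which gives a vanishing squared norm. For the converse, suppose $g_{\tau,c}=B_S^Tw$ for some $w$. Then the estimator $\widehat F=w^Ty$ satisfies $\widehat F=g_{\tau,c}^T\alpha=F_{\tau,c}(x_0)$ for all $\alpha$, so it recovers the target exactly.

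The second formulation in the statement needs no separate argument. It is simply the definition of $\operatorname{range}(B_S^T)$: the vectors $B_S^Tw$ with $w\in\mathbb{R}^{r|S|}$, where the dimension $r|S|$ matches the row count of $B_S$ in \eqref{bs}.

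The main point needing care is the quantifier, not any computation. The proposition asks for one fixed $w$ that works uniformly over all retained states $\alpha\in\mathbb{R}^K$; a $w$ chosen after seeing $\alpha$ would not count. The forward direction is exactly where this uniformity is used, through the basis-vector substitution. It is also worth remarking that $B_S$ does not need full column rank. Only the single direction $g_{\tau,c}$ must lie in the row space of $B_S$, which is what separates target recovery from state identification.
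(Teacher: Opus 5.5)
Your proposal is correct and follows the paper's proof: write $\widehat F-F_{\tau,c}(x_0)=(B_S^Tw-g_{\tau,c})^T\alpha$ and note that this vanishes for every $\alpha$ exactly when $B_S^Tw=g_{\tau,c}$. Your basis-vector substitution and your remark that one fixed $w$ must work for all $\alpha$ simply spell out the step the paper states in one line.
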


\begin{proof}
In the noiseless setting,
\[
\widehat F-F_{\tau,c}
=\bigl(B_S^Tw-g_{\tau,c}\bigr)^T\alpha.
\]
This vanishes for every $\alpha$ precisely when one has  $B_S^Tw=g_{\tau,c}$.
\end{proof}

We measure numerical exactness by the normalized target-span error
\begin{eqnarray}
e_{\mathrm{tar}}(S)
:=
\frac{\min_w\|B_S^Tw-g_{\tau,c}\|_2}
{\|g_{\tau,c}\|_2}.\label{etar}
\end{eqnarray}
In the experiments, we call a target numerically exact at tolerance $10^{-6}$ when $e_{\mathrm{tar}}(S)\leq10^{-6}$. Numerical rank counts singular values above $10^{-9}$, while target-span error uses the default cutoff in \texttt{np.linalg.lstsq} with \texttt{rcond=None}; the two diagnostics therefore use different numerical thresholds. The greedy exact-target budget is the first TAPS prefix satisfying the target-span criterion. It is a constructive budget for the TAPS sequence, not a proof of the smallest exact subset.

Full-state identification is stronger. For each retained eigenvalue $\lambda$, let $\mu_K(\lambda)$ be its multiplicity among $\lambda_0,\ldots,\lambda_{K-1}$, and set
\begin{eqnarray}
\mu_{\max}:=\max_\lambda\mu_K(\lambda).\label{mumax}
\end{eqnarray}

\begin{proposition}[Full-state lower bounds]
\label{prop:rank-lower}
If the complete retained state is identifiable from $r$ observations at each permanent vertex, then
\[
|S|\geq
\max\left\{
\left\lceil\frac{K}{r}\right\rceil,
\mu_{\max}
\right\}.
\]
\end{proposition}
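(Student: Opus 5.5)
Identifiability of the full retained state means that $\alpha\mapsto B_S\alpha$ is injective on $\mathbb{R}^K$, i.e.\ $\operatorname{rank}(B_S)=K$. The proof therefore reduces to two independent rank estimates for $B_S$. Each gives one term of the maximum.

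\emph{Counting bound.} The matrix $B_S$ in \eqref{bs} has $r|S|$ rows. Hence $K=\operatorname{rank}(B_S)\le r|S|$, so $|S|\ge K/r$. Since $|S|$ is an integer, $|S|\ge\lceil K/r\rceil$. This step is routine.

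\emph{Multiplicity bound.} Fix a retained eigenvalue $\lambda$ with multiplicity $\mu=\mu_K(\lambda)$ among $\lambda_0,\ldots,\lambda_{K-1}$, and let $I_\lambda$ be the set of those $\mu$ retained indices. Restrict attention to coefficient vectors $\alpha$ supported on $I_\lambda$. For such $\alpha$, the $j$-th row of $R_v$ gives
\[
(R_v\alpha)_j=\sum_{\ell\in I_\lambda}e^{-t_j\lambda}\phi_\ell(v)\alpha_\ell
= e^{-t_j\lambda}\,\bigl(\Phi_\lambda\alpha_{I_\lambda}\bigr)_v ,
\]
where $\Phi_\lambda$ is the $n\times\mu$ matrix with columns $\phi_\ell$, $\ell\in I_\lambda$. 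The key point is that, within one eigenspace, the time factor $e^{-t_j\lambda}$ is a common scalar. So all $r$ readings at $v$ are multiples of the single number $(\Phi_\lambda\alpha_{I_\lambda})_v$. The column submatrix $B_S[:,I_\lambda]$ is thus the product of a block-diagonal matrix and $P_S\Phi_\lambda$. The block-diagonal matrix has diagonal blocks equal to the vector $(e^{-t_j\lambda})_{j}$. The matrix $P_S\Phi_\lambda$ is the $|S|\times\mu$ restriction of $\Phi_\lambda$ to the rows in $S$. It follows that $\operatorname{rank}B_S[:,I_\lambda]\le |S|$.

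Full column rank of $B_S$ forces its columns indexed by $I_\lambda$ to be linearly independent, so $\mu\le|S|$. Taking the maximum over $\lambda$ gives $|S|\ge\mu_{\max}$. Combining the two bounds yields the claim.

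The main care point is the factorization in the multiplicity step: one must check that the time dependence is genuinely identical across all columns of one eigenspace. This holds by \eqref{rv}, because $e^{-t_j\lambda_\ell}$ depends on $\ell$ only through $\lambda_\ell$. It is exactly why repeated readings cannot separate degenerate modes, so temporal reuse does not help against multiplicity.
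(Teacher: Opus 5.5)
Your proof is correct and follows the paper's argument: the counting bound comes from $\operatorname{rank}(B_S)\le r|S|$, and the multiplicity bound comes from the fact that, within one retained eigenspace, each block $R_v$ restricts to the rank-one outer product of $(e^{-t_j\lambda})_j$ with $(\phi_\ell(v))_{\ell\in I_\lambda}$. Your explicit factorization of $B_S[:,I_\lambda]$ through the $|S|\times\mu$ matrix $P_S\Phi_\lambda$ spells out the paper's step "an eigenspace of multiplicity $\mu_K(\lambda)$ requires at least that many distinct vertices" (stacking $|S|$ rank-one blocks gives rank at most $|S|$).
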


\begin{proof}
Since $B_S$ has $r|S|$ rows,
\[
\operatorname{rank}(B_S)\leq r|S|,
\]
so full-state identification requires $|S|\geq\lceil K/r\rceil$. For a retained eigenvalue $\lambda$, the restriction of one block to its eigenspace has the outer-product form
\[
R_v^{(\lambda)}=
\begin{bmatrix}
e^{-t_1\lambda}\\
\vdots\\
e^{-t_r\lambda}
\end{bmatrix}
\begin{bmatrix}
\phi_\ell(v)
\end{bmatrix}_{\lambda_\ell=\lambda}^{T},
\]
where $\bigl[\phi_\ell(v)\bigr]_{\lambda_\ell=\lambda}$ denotes the vector of entries $\phi_\ell(v)$ over those indices $\ell$ with $\lambda_\ell=\lambda$. Therefore $R_v^{(\lambda)}$ (as in \eqref{rv}) has rank at most one. An eigenspace of multiplicity $\mu_K(\lambda)$ requires at least that many distinct vertices. Taking the largest multiplicity gives the second bound.
\end{proof}

Proposition~\ref{prop:rank-lower} identifies the remaining need for spatial diversity. Repeated observations can enlarge the measurement row space, but one permanent vertex still contributes at most one independent spatial direction within a repeated eigenspace.

For every retained model used below, $\mu_{\max}=1$, so the binding bound in Proposition~\ref{prop:rank-lower} is $\lceil K/r\rceil$.

\subsection{Stable Target Estimation}
\label{sec:stable}

Exact representation alone does not control sensitivity to noise. We therefore define, for $\beta>0$,
\begin{eqnarray}
    J_\beta(S)
:=
\min_w
\left\{
\|B_S^Tw-g_{\tau,c}\|_2^2
+
\beta\|w\|_2^2
\right\}.\label{jb}
\end{eqnarray}
The first term measures target mismatch, while the second penalizes estimator amplification.

\begin{proposition}[Regularized target risk]
\label{prop:risk}
The minimizing estimator is
\begin{eqnarray}
    w_\beta:=
\left(B_SB_S^T+\beta I\right)^{-1}B_Sg_{\tau,c},\label{wbeta}
\end{eqnarray}
and
\[
J_\beta(S)
=
\beta g_{\tau,c}^T
\left(B_S^TB_S+\beta I\right)^{-1}
 g_{\tau,c}.
\]
\end{proposition}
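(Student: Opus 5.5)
The plan is to treat $J_\beta(S)$ in \eqref{jb} as a strictly convex quadratic in $w$, solve the first-order condition, and then simplify the minimum value with the push-through identity. Write $B=B_S$ and $g=g_{\tau,c}$ throughout.

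\emph{Step 1: the minimizer.} Expanding the objective gives
\[
\|B^Tw-g\|_2^2+\beta\|w\|_2^2
= w^T(BB^T+\beta I)w-2w^TBg+g^Tg .
\]
Since $BB^T$ is positive semidefinite and $\beta>0$, the matrix $BB^T+\beta I$ is positive definite. Hence the objective is strictly convex and has a unique minimizer. Setting the gradient to zero gives $(BB^T+\beta I)w=Bg$, which is exactly \eqref{wbeta}.

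\emph{Step 2: the minimum value.} At the optimum, $w_\beta^T(BB^T+\beta I)w_\beta=w_\beta^TBg$. Substituting this into the expansion gives
\[
J_\beta(S)=g^Tg-g^TB^T(BB^T+\beta I)^{-1}Bg .
\]

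\emph{Step 3: reduce to the stated form.} It remains to show
\[
I-B^T(BB^T+\beta I)^{-1}B=\beta(B^TB+\beta I)^{-1}.
\]
First we use the push-through identity
\[
B^T(BB^T+\beta I)^{-1}=(B^TB+\beta I)^{-1}B^T .
\]
To verify it, multiply on the left by $B^TB+\beta I$ and on the right by $BB^T+\beta I$. Both sides then become $B^TBB^T+\beta B^T$. With this identity, the left-hand side above becomes
\[
(B^TB+\beta I)^{-1}\bigl[(B^TB+\beta I)-B^TB\bigr]=\beta(B^TB+\beta I)^{-1}.
\]
Both inverses exist because $\beta>0$. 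Sandwiching between $g^T$ and $g$ yields the claimed formula.

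The only nonroutine point is Step 3: switching from the $r|S|\times r|S|$ Gram matrix $BB^T$ to the $K\times K$ matrix $B^TB$. The push-through identity handles it directly. If needed, the same identity can be cross-checked with an SVD of $B$, where both sides reduce to $\beta/(\sigma_i^2+\beta)$ on each singular direction and to $1$ on the null space of $B$.
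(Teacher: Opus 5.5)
Your proposal is correct and follows the paper's own proof: solve the normal equations $(B_SB_S^T+\beta I)w=B_Sg_{\tau,c}$, substitute back, and convert to the $K\times K$ form with the push-through identity. You also fill in details the paper leaves implicit, namely strict convexity via $\beta>0$, the simplification of the optimal value to $g^Tg-g^TB^T(BB^T+\beta I)^{-1}Bg$, and an explicit check of the push-through identity.
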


\begin{proof}
The normal equations give
\[
\left(B_SB_S^T+\beta I\right)w=B_Sg_{\tau,c}.
\]
Substitution of their solution, followed by the push-through identity, yields the stated expression for $J_\beta(S)$.
\end{proof}

We normalize this objective by
\begin{eqnarray}
 r_\beta(S;g_{\tau,c})^2
:=
\frac{J_\beta(S)}{\|g_{\tau,c}\|_2^2}
=
b_\beta(S)^2+a_\beta(S)^2,\label{rbeta}   
\end{eqnarray}
where
\begin{eqnarray}
b_\beta(S)
&:=&
\frac{\|B_S^Tw_\beta-g_{\tau,c}\|_2}
{\|g_{\tau,c}\|_2},\nonumber\\
a_\beta(S)
&:=&
\frac{\sqrt{\beta}\|w_\beta\|_2}
{\|g_{\tau,c}\|_2}.\label{ab}
\end{eqnarray}

Thus $b_\beta$ measures target mismatch and $a_\beta$ measures amplification.

Write $\sigma_{\min}(B_S)$ for the smallest singular value of $B_S$. To compare target estimation with state identification, define the worst-direction full-state risk
\begin{eqnarray}
    r_{\mathrm{full},\beta}(S)^2
:=
\frac{\beta}{\sigma_{\min}(B_S)^2+\beta},\label{rfull}
\end{eqnarray}
with $\sigma_{\min}(B_S)=0$ when $B_S$ is rank deficient.

\begin{proposition}[Target and full-state risk]
\label{prop:comparison}
For every nonzero target $g_{\tau,c}$,
\[
r_\beta(S;g_{\tau,c})
\leq
r_{\mathrm{full},\beta}(S).
\]
\end{proposition}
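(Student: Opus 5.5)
The plan is to start from the closed form in Proposition~\ref{prop:risk}, namely
\[
r_\beta(S;g_{\tau,c})^2=\frac{\beta\, g_{\tau,c}^T\left(B_S^TB_S+\beta I\right)^{-1}g_{\tau,c}}{\|g_{\tau,c}\|_2^2},
\]
and to recognize the right-hand side as $\beta$ times a Rayleigh quotient of the symmetric positive definite matrix $M:=(B_S^TB_S+\beta I)^{-1}\in\mathbb{R}^{K\times K}$. By the Rayleigh--Ritz characterization, this quotient is at most $\lambda_{\max}(M)$ for every nonzero $g_{\tau,c}$. So the whole argument comes down to computing $\lambda_{\max}(M)$.

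To do this, diagonalize $B_S^TB_S=Q\,\mathrm{diag}(s_1^2,\ldots,s_K^2)\,Q^T$, where the $s_i$ are the singular values of $B_S$. We pad with zeros when $r|S|<K$, so that the list always has length $K$. Then $M=Q\,\mathrm{diag}\bigl((s_i^2+\beta)^{-1}\bigr)Q^T$, and its largest eigenvalue is $1/(s_{\min}^2+\beta)$, where $s_{\min}$ is the smallest of these $K$ values.

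The one point needing care is to identify $s_{\min}$ with $\sigma_{\min}(B_S)$ as the paper defines it. We are really reading $\sigma_{\min}(B_S)$ as $\lambda_{\min}(B_S^TB_S)^{1/2}$. There are three cases. If $B_S$ has full column rank $K$, this is the usual smallest singular value. If $B_S$ is rank deficient, then $B_S^TB_S$ has a zero eigenvalue, so $s_{\min}=0$; this matches the convention $\sigma_{\min}(B_S)=0$ stated after \eqref{rfull}. This also covers $r|S|<K$, since then the rank is necessarily less than $K$. Hence
\[
\lambda_{\max}(M)=\frac{1}{\sigma_{\min}(B_S)^2+\beta}.
\]
Multiplying by $\beta$ gives $r_\beta(S;g_{\tau,c})^2\le r_{\mathrm{full},\beta}(S)^2$, and taking nonnegative square roots gives the claim.

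I expect no real obstacle. The only subtle step is this bookkeeping of the rank-deficient and wide-matrix cases, so that the convention in \eqref{rfull} agrees with the spectral bound. I would also remark that equality holds when $g_{\tau,c}$ lies in the eigenspace of $B_S^TB_S$ for its smallest eigenvalue, which justifies calling $r_{\mathrm{full},\beta}$ the worst-direction risk.
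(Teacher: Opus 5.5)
Your proposal is correct and follows the paper's own argument: write $r_\beta^2$ as the Rayleigh quotient of $\beta(B_S^TB_S+\beta I)^{-1}$ at $g_{\tau,c}$, then bound it by the largest eigenvalue $\beta/(\sigma_{\min}(B_S)^2+\beta)$. Your explicit handling of the rank-deficient and $r|S|<K$ cases makes precise the convention $\sigma_{\min}(B_S)=0$ that the paper uses only implicitly.
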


\begin{proof}
The squared target risk is the Rayleigh quotient of
\[
\beta\left(B_S^TB_S+\beta I\right)^{-1}
\]
at $g_{\tau,c}$. Its maximum over all nonzero directions is $\beta/(\sigma_{\min}(B_S)^2+\beta)$.
\end{proof}

Proposition~\ref{prop:comparison} shows that controlling every retained direction is sufficient, but not necessary, for controlling the target. The target may also occupy substantially fewer spectral directions: for $0<\delta<1$, we define $d_{\mathrm{eff}}(\delta)$ as
\begin{eqnarray}
\min\left\{
|\mathcal{J}|:
\sum_{\ell\in\mathcal{J}}|g_{\tau,c}(\ell)|^2
\geq
(1-\delta^2)\|g_{\tau,c}\|_2^2
\right\}. \label{deff}
\end{eqnarray}
This effective target dimension measures spectral concentration; it does not by itself guarantee that the available measurement blocks represent those modes efficiently.

We next formalize the value of reusing installed hardware. When the observation schedule must be shown explicitly, write $e_{\mathrm{tar},\mathcal{T}}$, $r_{\beta,\mathcal{T}}$, and $r_{\mathrm{full},\beta,\mathcal{T}}$.

\begin{proposition}[Additional repeated observations]
\label{prop:time-monotonicity}
Fix $S$, $g_{\tau,c}$, the retained model, and $\beta>0$. If $\mathcal{T}\subseteq\mathcal{T}'$, then
\[
\begin{aligned}
e_{\mathrm{tar},\mathcal{T}'}(S)
&\leq e_{\mathrm{tar},\mathcal{T}}(S),\\
r_{\beta,\mathcal{T}'}(S;g_{\tau,c})
&\leq r_{\beta,\mathcal{T}}(S;g_{\tau,c}),\\
r_{\mathrm{full},\beta,\mathcal{T}'}(S)
&\leq r_{\mathrm{full},\beta,\mathcal{T}}(S).
\end{aligned}
\]
\end{proposition}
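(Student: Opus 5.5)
The plan is to reduce all three inequalities to one structural fact. When $\mathcal{T}\subseteq\mathcal{T}'$, the block $R_v$ built from $\mathcal{T}$ is a row-subset of the block $R'_v$ built from $\mathcal{T}'$, since each row of \eqref{rv} depends only on its own time $t_j$. Stacking over $v\in S$, the matrix $B_S$ is therefore a row-subset of $B'_S$. Up to a permutation of rows $P$, which changes none of the quantities involved, we can write
\[
PB'_S=\begin{bmatrix}B_S\\ C\end{bmatrix}
\]
for some matrix $C$ collecting the rows at the extra times. Once this is recorded, each claim becomes a monotonicity statement under appending rows.

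\emph{Target-span error.} Any $w\in\mathbb{R}^{r|S|}$ extends by zeros to $w'=(w,0)$, and $B_S'^{T}w'=B_S^Tw$ after the permutation. The minimum in \eqref{etar} over the larger space is thus taken over a superset of achievable residuals, so $e_{\mathrm{tar},\mathcal{T}'}(S)\le e_{\mathrm{tar},\mathcal{T}}(S)$. The normalization $\|g_{\tau,c}\|_2$ does not depend on $\mathcal{T}$, because $g_{\tau,c}$ involves only $\tau$, $c$ and the retained model.

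\emph{Regularized risk.} I use the same zero-padding on the variational definition \eqref{jb}. Padding leaves $\|w\|_2$ unchanged and leaves $B_S^Tw$ unchanged, so $J_{\beta,\mathcal{T}'}(S)\le J_{\beta,\mathcal{T}}(S)$. Dividing by the $\mathcal{T}$-independent $\|g_{\tau,c}\|_2^2$ gives the inequality for $r_\beta$. As a cross-check, the closed form of Proposition~\ref{prop:risk} gives the same conclusion: $B_S'^TB'_S=B_S^TB_S+C^TC\succeq B_S^TB_S$, and matrix inversion is order-reversing on positive definite matrices, so $(B_S'^TB'_S+\beta I)^{-1}\preceq(B_S^TB_S+\beta I)^{-1}$. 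Evaluating both at $g_{\tau,c}$ yields the claim.

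\emph{Full-state risk.} The Loewner inequality $B_S'^TB'_S\succeq B_S^TB_S$ together with Courant--Fischer gives
\[
\lambda_{\min}(B_S'^TB'_S)\ge\lambda_{\min}(B_S^TB_S).
\]
Here $\sigma_{\min}(\cdot)^2$ should be read as $\lambda_{\min}$ of the $K\times K$ Gram matrix, which matches the paper's convention that $\sigma_{\min}=0$ when $B_S$ is rank deficient, including the case of fewer than $K$ rows. Since $s\mapsto\beta/(s+\beta)$ is decreasing on $[0,\infty)$, the bound follows.

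The main obstacle is this last interpretive step. When $r|S|<K$, the literal smallest singular value of the wide matrix $B_S$ is nonzero even though $B_S$ is rank deficient. I would therefore state explicitly that $\sigma_{\min}(B_S)$ denotes the $K$-th singular value, i.e. $\sqrt{\lambda_{\min}(B_S^TB_S)}$, so that the monotonicity argument applies uniformly in every case.
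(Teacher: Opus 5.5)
Your proof is correct and follows the paper's argument. The paper also writes the enlarged matrix as $B_S$ with rows $E_S$ appended, and then:
\begin{itemize}
\item uses $\operatorname{range}(B_S^T)\subseteq\operatorname{range}({B_S'}^T)$ for the target-span error, which is your zero-padding;
\item uses ${B_S'}^TB_S'\succeq B_S^TB_S$ with order reversal under inversion for $r_\beta$;
\item uses that appending rows cannot decrease ``the smallest singular value relevant to full-state recovery'' for $r_{\mathrm{full},\beta}$.
\end{itemize}
Your explicit row permutation and your reading of $\sigma_{\min}(B_S)$ as $\sqrt{\lambda_{\min}(B_S^TB_S)}$ make precise what the paper leaves implicit. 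The second point is genuinely needed: for a wide $B_S$, the literal smallest singular value can decrease when rows are added.
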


\begin{proof}
Writing the enlarged measurement matrix as
\[
B_S'=
\begin{bmatrix}
B_S\\E_S
\end{bmatrix},
\]
where $E_S$ collects the rows contributed by the additional observation times $\mathcal{T}'\setminus\mathcal{T}$, gives
\[
\operatorname{range}(B_S^T)\subseteq\operatorname{range}({B_S'}^T)
\]
and
\[
{B_S'}^TB_S'=B_S^TB_S+E_S^TE_S\succeq B_S^TB_S.
\]
The first inclusion gives the target-span inequality. Order reversal under matrix inversion gives the target-risk inequality, while adding rows cannot decrease the smallest singular value relevant to full-state recovery.
\end{proof}

The proposition concerns additional measurements at a fixed set of locations. It does not say that fewer locations observed more often are preferable when the total number of scheduled readings is fixed; Section~\ref{sec:heldout-budgets} tests that different question on held-out data.

\subsection{Target-Aware Permanent Sampling}

TAPS builds a nested sequence of permanent sets. Let
\begin{eqnarray}
    C_S=\beta I+\sum_{v\in S}R_v^TR_v. \label{cs}
\end{eqnarray}
Beginning with $S_0=\varnothing$, TAPS selects
\begin{eqnarray}
v_j^\star
&:=&
\arg\min_{v\notin S_j}
\beta g_{\tau,c}^T
\left(C_{S_j}+R_v^TR_v\right)^{-1}
 g_{\tau,c},\nonumber\\
S_{j+1}&:=&S_j\cup\{v_j^\star\}.\label{taps}
\end{eqnarray}
Each step adds the location whose complete temporal block gives the smallest regularized target risk \eqref{rbeta}.

The criterion is a grouped, permanent-location analogue of $c$-optimal experimental design, which selects measurements for one prescribed linear combination of unknown parameters \cite{pukelsheim2006optimal}. Under the same criterion and tie-breaking, greedy $c$-optimal design and TAPS make the same selections. The distinction is the permanent space-time formulation: one decision adds the entire repeated-observation block $R_v$, while cost is charged once for the physical installation. This formulation supports the target-versus-state comparison, the target-response and finite-update marginal-gain factorization, and prospective placement when candidate sites do not yet have ground-monitor PM$_{2.5}$ histories.

We compare TAPS with target-weight ranking, weighted degree, geographic coverage, random placement, ridge leverage, D-optimal placement, and QR pivoting. D-optimal placement greedily selects $v\notin S$ to maximize
\begin{eqnarray}
\log\det\!\left(\beta I+\sum_{u\in S}R_u^TR_u+R_v^TR_v\right).\label{dopt}
\end{eqnarray}
For ridge leverage, each location receives the fixed block score
\begin{eqnarray}
\mathrm{lev}_v:=\operatorname{tr}\!\left[R_v\left(\beta I+\sum_{u\in V}R_u^TR_u\right)^{-1}R_v^T\right],\label{lev}
\end{eqnarray}

and locations are ranked by decreasing $\mathrm{lev}_v$. QR placement applies column-pivoted QR to $U_K^T$; the column pivots therefore index candidate locations, and the first $m$ pivots form the $m$-location network. Greedy candidates are scanned in the fixed candidate order, so unresolved numerical ties retain the first candidate encountered; the held-out D-optimal implementation requires a log-determinant improvement greater than $10^{-12}$ before replacing the incumbent. Target-weight ranking and weighted degree use stable decreasing sorts. Stability of D-optimal sensor placements under spectral perturbation of the retained modes is studied in \cite{YS26}.

\subsection{Target Response and Redundancy}

The central idea of TAPS appears explicitly in its marginal gain. Let
\begin{eqnarray}
    h_S&:=&C_S^{-1}g_{\tau,c},~\label{hs} \\
 q_v(S)&:=&R_vh_S,~\label{qv}\\
G_v(S)&:=&R_vC_S^{-1}R_v^T, ~\label{Gv}
\end{eqnarray}
for $C_S$ defined in \eqref{cs}, and $g_{\tau,c}$ as in \eqref{gt}, and $R_v$ as in \eqref{rv}.
\begin{proposition}[Marginal-gain factorization]
\label{prop:marginal}
The reduction in squared normalized target risk from adding $v\notin S$ is
\begin{eqnarray}
r_\beta(S;g_{\tau,c})^2
-
r_\beta(S\cup\{v\};g_{\tau,c})^2
=
A_v(S)\eta_v(S),\label{factor}
\end{eqnarray}
where
\begin{eqnarray}
A_v(S)
:=
\frac{\beta\|q_v(S)\|_2^2}
{\|g_{\tau,c}\|_2^2}\label{av}
\end{eqnarray}
is the raw target response and, when $q_v(S)\neq0$, and 
\begin{eqnarray}
    \eta_v(S)
:=
\frac{
q_v(S)^T\left(I+G_v(S)\right)^{-1}q_v(S)
}{\|q_v(S)\|_2^2}\label{eta}
\end{eqnarray}
is the finite-update correction factor. If $q_v(S)=0$, set $\eta_v(S)=0$. In all cases,
\[
0\leq\eta_v(S)\leq1.
\]
\end{proposition}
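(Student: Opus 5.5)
By Proposition~\ref{prop:risk}, with $C_S$ as in \eqref{cs}, the normalized risk is $r_\beta(S;g_{\tau,c})^2=\beta\, g_{\tau,c}^T C_S^{-1}g_{\tau,c}/\|g_{\tau,c}\|_2^2$. This holds because $B_S^TB_S=\sum_{v\in S}R_v^TR_v$, so $B_S^TB_S+\beta I=C_S$. Adding $v$ replaces $C_S$ by $C_{S\cup\{v\}}=C_S+R_v^TR_v$. The plan is therefore to compute the difference of two quadratic forms in $g_{\tau,c}$ through a low-rank update of $C_S^{-1}$.

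The first step is to apply the Woodbury identity to the rank-$r$ update $R_v^TR_v$:
\[
(C_S+R_v^TR_v)^{-1}=C_S^{-1}-C_S^{-1}R_v^T\bigl(I+R_vC_S^{-1}R_v^T\bigr)^{-1}R_vC_S^{-1}.
\]
This is valid because $C_S\succ0$ (as $\beta>0$) and $I+G_v(S)\succeq I$ is invertible. Sandwiching between $g_{\tau,c}^T$ and $g_{\tau,c}$, and using the symmetry of $C_S^{-1}$ together with $h_S=C_S^{-1}g_{\tau,c}$ and $q_v(S)=R_vh_S$, gives
\[
g_{\tau,c}^TC_S^{-1}g_{\tau,c}-g_{\tau,c}^TC_{S\cup\{v\}}^{-1}g_{\tau,c}=q_v(S)^T\bigl(I+G_v(S)\bigr)^{-1}q_v(S).
\]
Multiplying by $\beta/\|g_{\tau,c}\|_2^2$, and then multiplying and dividing by $\|q_v(S)\|_2^2$ when $q_v(S)\neq0$, yields exactly $A_v(S)\eta_v(S)$ as in \eqref{factor}. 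When $q_v(S)=0$ the difference is zero, and so is $A_v(S)$. Hence the convention $\eta_v(S)=0$ keeps \eqref{factor} valid.

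The remaining step is the bound $0\le\eta_v(S)\le1$. Since $C_S^{-1}\succ0$, the matrix $G_v(S)=R_vC_S^{-1}R_v^T$ is symmetric positive semidefinite. Its eigenvalues are therefore $\ge0$, so $(I+G_v(S))^{-1}$ is symmetric with eigenvalues in $(0,1]$. The quantity $\eta_v(S)$ is the Rayleigh quotient of this matrix at $q_v(S)$, so it lies between the extreme eigenvalues, which gives $0<\eta_v(S)\le1$ when $q_v\ne0$ and $\eta_v(S)=0$ by convention otherwise.

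I expect no serious obstacle. The only points needing care are the correct identification of $C_S$ with $B_S^TB_S+\beta I$, so that Proposition~\ref{prop:risk} applies, and checking that the symmetric form of Woodbury produces $q_v(S)$ on both sides.
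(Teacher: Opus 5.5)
Your proposal is correct and follows the paper's proof. You identify $C_S=B_S^TB_S+\beta I$ so that Proposition~\ref{prop:risk} expresses the risk through $C_S^{-1}$, apply the Woodbury identity to the update $R_v^TR_v$, and bound $\eta_v(S)$ as a Rayleigh quotient of $(I+G_v(S))^{-1}$, whose eigenvalues lie in $(0,1]$; your explicit treatment of the $q_v(S)=0$ case and of invertibility supplies details the paper leaves implicit.
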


\begin{proof}
The Woodbury identity gives
\[
\begin{aligned}
\left(C_S+R_v^TR_v\right)^{-1}
&=C_S^{-1}\\
&\quad-C_S^{-1}R_v^T
\left(I+R_vC_S^{-1}R_v^T\right)^{-1}\\
&\qquad\cdot R_vC_S^{-1}.
\end{aligned}
\]
Substitution into the target-risk expression yields
\[
\frac{\beta}{\|g_{\tau,c}\|_2^2}
q_v(S)^T\left(I+G_v(S)\right)^{-1}q_v(S)
=A_v(S)\eta_v(S).
\]
Since $G_v(S)$ is positive semidefinite, the eigenvalues of $(I+G_v(S))^{-1}$ lie in $(0,1]$, proving $0\leq\eta_v(S)\leq1$.
\end{proof}

The factor $A_v(S)$ of \eqref{av} measures the candidate block's response to the current target direction $C_S^{-1}g_{\tau,c}$. The factor $\eta_v(S)$, as in \eqref{eta},  converts this raw response into the exact finite-update gain. Both factors depend on the installed network, and their product gives the reduction in squared regularized target risk. The correction factor alone is not a monotone measure of redundancy.

To isolate the finite-update correction, the raw-response ablation drops the second factor in \eqref{factor} and selects
\begin{eqnarray}
    v_{\mathrm{raw}}(S)
:=
\arg\max_{v\notin S}A_v(S).\label{vraw}
\end{eqnarray}
Raw response uses the same graph, target, diffusion model, schedule, regularization, and current set as TAPS, but omits the finite-update correction $\eta_v(S)$, as in \eqref{eta}. It therefore still conditions on the installed set through $C_S^{-1}$; the ablation removes only the correction from raw response to exact marginal gain.
Placement rules are fixed before their corresponding evaluation outcomes are used.
%
%\end{multicols}
%
%\begin{center}
%\scriptsize
%\setlength{\tabcolsep}{3pt}
%\renewcommand{\arraystretch}{1.05}
%
%\begin{tabular}{
%|>{\centering\arraybackslash}p{0.13\textwidth}
%|p{0.31\textwidth}
%|>{\centering\arraybackslash}p{0.13\textwidth}
%|p{0.31\textwidth}|}
%\hline
%\multicolumn{2}{|c|}{\textbf{Graph and Measurement Model}} &
%\multicolumn{2}{c|}{\textbf{Target-Aware Design}} \\
%\hline
%\textbf{Symbol} & \textbf{Description} &
%\textbf{Symbol} & \textbf{Description} \\
%\hline
%$G=(V,E,\omega)$ & Weighted graph
%& $c$ & Target weights \\
%
%$L=D-W$ & Weighted graph Laplacian
%& $\tau$ & Prediction horizon \\
%
%$U_K,\Lambda_K$ & Retained eigensystem
%& $g_{\tau,c}$ & Retained target direction \\
%
%$K$ & Retained dimension
%& $\beta$ & Regularization parameter \\
%
%$\mathcal{T}$ & Observation times
%& $C_S$ & Regularized information matrix \\
%
%$r=|\mathcal{T}|$ & Readings per location
%& $e_{\mathrm{tar}}(S)$ & Target-span error \\
%
%$R_v$ & Measurement block at $v$
%& $r_\beta(S;g_{\tau,c})$ & Regularized target risk \\
%
%$B_S$ & Measurement matrix for $S$
%& $A_v(S)$ & Raw target response \\
%
%$S$ & Installed locations
%& $\eta_v(S)$ & Finite-update correction \\
%\hline
%\end{tabular}
%
%\captionof{table}{Notation used throughout the TAPS framework.}
%\label{tab:notation}
%\end{center}
%
%\begin{multicols}{2}

\section{California PM$_{2.5}$ Study}
\label{sec:california}

We first evaluate TAPS on California wildfire-season PM$_{2.5}$. The study constructs a population-weighted Bay Area target and an empirical station graph, then evaluates model-based target information separately from held-out prediction of future observed concentrations.

\subsection{Study Region and Target}

We use daily federal reference or equivalent method PM$_{2.5}$ observations from EPA AirData during June to October 2018 to 2024 \cite{epaAirData}. Requiring observations in at least five of the seven wildfire seasons and a median of at least 40 observed days per season leaves 104 California candidates.

We screen four air districts for both monitoring coverage and recurrent smoke exposure. NOAA Hazard Mapping System polygons assign each station-day its strongest overlapping category (light, medium, or heavy), and official CARB boundaries define the districts \cite{noaaHMS,carbAirDistricts}. Table~\ref{tab:regions} summarizes the four-region comparison.

\begin{center}
\small
\setlength{\tabcolsep}{3pt}
\renewcommand{\arraystretch}{1.10}
\begin{tabularx}{\columnwidth}{|>{\raggedright\arraybackslash}X|r|r|r|}
\hline
\textbf{Region} & \textbf{Stations} & \shortstack{\textbf{Smoke}\\\textbf{Days}} & \shortstack{\textbf{Medium/}\\\textbf{Heavy}} \\
\hline
Sacramento Metropolitan & 5 & 45.0 & 10.0 \\
\hline
Bay Area & \textbf{16} & \textbf{41.0} & \textbf{8.0} \\
\hline
San Joaquin Valley & 17 & 34.5 & 6.0 \\
\hline
South Coast & 18 & 18.0 & 3.0 \\
\hline
\end{tabularx}
\captionof{table}{Selection of the California Target Region. Entries give qualified PM$_{2.5}$ stations and median smoke-covered days per station-year during June to October 2018 to 2024. Data are from EPA AirData, NOAA HMS, and CARB \cite{epaAirData,noaaHMS,carbAirDistricts}.\repro{01-regions.ipynb}{8}}
\label{tab:regions}
\end{center}

The Bay Area provides the best balance for the primary target: 16 qualified stations and a median of 41 smoke-covered days per station-year, including eight medium- or heavy-smoke days. Sacramento is more smoke-affected but has only five qualified stations, while South Coast has substantially less smoke exposure.

We define $c$ as population-weighted Bay Area PM$_{2.5}$. Population from the 2024 ACS B01003 table is intersected with 2024 TIGER/Line tracts and the official district boundary, with boundary tracts weighted by their fractional area inside the district \cite{censusACS2024,censusTIGER2024,carbAirDistricts}. The target represents approximately 7.41 million people in 1,758 tracts. Assigning tract population to the nearest qualified station and normalizing gives positive target weight to 21 candidates, including 16 inside the district.

\subsection{Empirical Graph and Model}

We represent empirical similarity between monitoring locations by a weighted graph. For stations $u$ and $v$, let $d(u,v)$ be geographic distance and $\rho_{uv}$ their PM$_{2.5}$ correlation. Correlations are estimated from 2018 to 2022, graph parameters are selected on 2023, and no 2024 concentration enters graph construction. For each station $u$, let $\sigma_u$ be the distance to its $k$th nearest geographic neighbor. We symmetrize the directed $k$-nearest-neighbor graph by edge union and set
\[
\omega_{uv}
=
\mathbf{1}_{\{\{u,v\}\in E\}}
\exp\left(-\frac{d(u,v)^2}{\sigma_u\sigma_v}\right)
\max\{\rho_{uv},0\}^{q}.
\]

We search $k\in\{4,5,6,8,10\}$ and $q\in\{0,1,2\}$, retain graphs that are connected on their positive-weight edges, and rank them by 2023 smoke-day normalized root-squared error, overall normalized root-squared error, and then edge count. Here the normalized error is $\sqrt{\sum(\hat y-y)^2/\sum y^2}$, distinct from the held-out nRMSE in Table~\ref{tab:heldout-models}. The selected graph uses $k=10$ and $q=2$, giving 104 vertices, 664 weighted edges, and one connected component. It is a similarity model, not a literal model of atmospheric transport. We retain $K=40$ modes, which capture 94.7\% of total training energy and 89.0\% of centered spatial energy, and use
\[
t_j\lambda_{K-1}\in\{0,0.25,0.5\},
\qquad
\tau\lambda_{K-1}=1.
\]
Here $B_{\mathrm{all}}$ denotes the measurement matrix $B_S$ for the full candidate set $S=V$ under the same observation schedule, and
\begin{eqnarray}
    \beta
=
0.01\frac{\operatorname{Tr}(B_{\mathrm{all}}^TB_{\mathrm{all}})}{K}
=
0.024502.\label{betacal}
\end{eqnarray}
Figures~\ref{fig:california-context} and~\ref{fig:california-maps} show the resulting California similarity graph, Bay Area target, and ten-location TAPS placement.

\begin{center}
\includegraphics[width=\columnwidth]{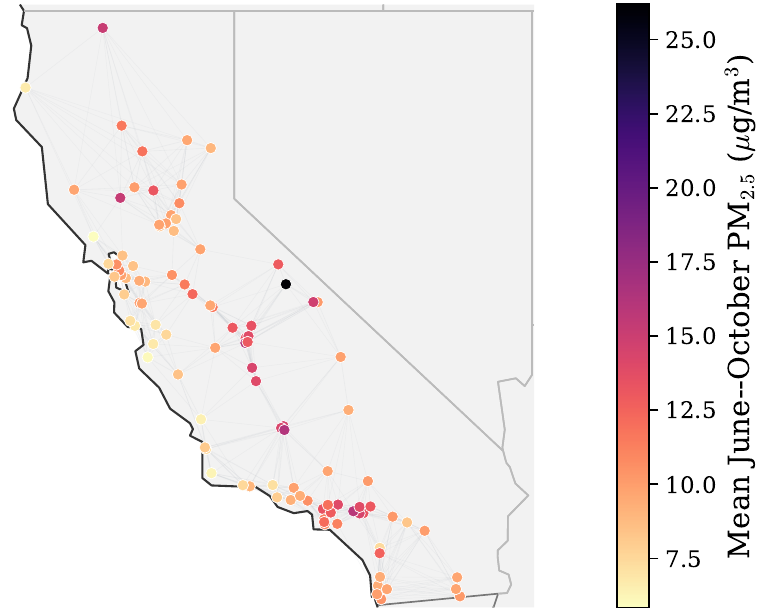}
\captionof{figure}{California wildfire-season monitoring network. Candidate stations are colored by mean June to October PM$_{2.5}$, and edges show the selected empirical similarity graph.\repro{02-graphs.ipynb}{12}}
\label{fig:california-context}
\end{center}

\begin{center}
\includegraphics[width=\columnwidth]{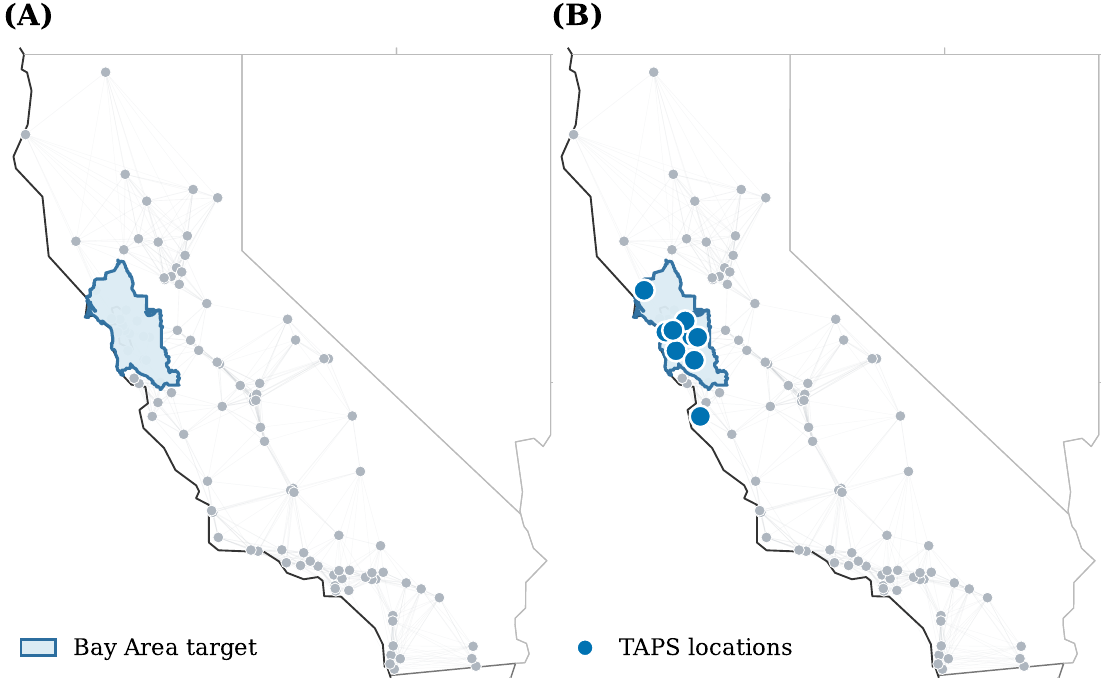}
\captionof{figure}{Bay Area target and TAPS placement. (A) The 104-station candidate graph with the Bay Area target highlighted. (B) The ten permanent locations selected by TAPS on the same graph. Locations outside the target region may be selected when they add information about the prescribed regional exposure.\repro{02-graphs.ipynb}{20}}
\label{fig:california-maps}
\end{center}

\subsection{Evaluation Design}

We use two complementary endpoints. The model-based experiments ask how much information a placement contains about the prescribed target under graph diffusion. The held-out experiments ask how well the selected measurements predict future observed PM$_{2.5}$.

\paragraph{Model-Based Analysis.}
We compare placements at the same permanent-location budget and observation schedule using target-span error, target mismatch, amplification, regularized target risk $r_\beta(S;g_{\tau,c})$, $\operatorname{rank}(B_S)$, and $\sigma_{\min}(B_S)$. Empirical risk values below report $r_\beta$; marginal gains are differences in $r_\beta^2$. The ten-location comparison includes 200 random sets generated with seed 7. Bandwidth-horizon sensitivity uses 50 random sets for each of the 12 combinations $K\in\{20,30,40,50\}$ and $\tau\lambda_{K-1}\in\{0.5,1,2\}$. We use the same settings to test the marginal-gain identity, conditional redundancy, connected regionalization, quota transfer, and geographic spacing.

\paragraph{Held-Out Prediction.}
The Bay Area region, 104-station study universe, and population-weighted target are constructed before the held-out forecasting split using the full study design described above. Within that universe, candidate eligibility and graph correlations for forecasting use 2018 to 2021. Requiring a station in at least three training years with a median of at least 40 June to October observations leaves 99 candidates, including all 21 target-reference stations. The resulting connected graph has 634 edges, with $k=10$, $q=2$, and at least 60 overlapping observations per correlation estimate. Unlike the 104-station model-based graph, this held-out graph uses one global distance scale, the median length of the symmetrized geographic edges, $\sigma_d=73.29$ km, with weight \begin{eqnarray}
   \omega_{uv}= \exp[-(d(u,v)/\sigma_d)^2]\max\{\rho_{uv},0\}^2.\label{omegacal}
\end{eqnarray} Predictor parameters are selected on 2022, models are refit on 2018 to 2022, and 2024 is reserved for the primary forecasting test.

Measurements on days $d-4$, $d-3$, and $d-2$ predict population-weighted Bay Area PM$_{2.5}$ on day $d$, without crossing calendar years. On each day, the observed target renormalizes the 21 fixed reference weights over stations with available readings. We retain days with at least 90\% of the total target weight observed, giving 135 untouched dates in 2024. For the 2023 to 2024 robustness analysis, we retain the 17 target stations with at least 75\% availability in every wildfire season from 2018 to 2024. This fixed target retains 84.37\% of the original weight and correlates $0.9996$ with the primary target over 2018 to 2022.

Validation selects $K=30$, $\tau=2$, and $\beta=10^{-4}$. For graph diffusion, measurements on $d-4$, $d-3$, and $d-2$ map to normalized times $0$, $\tau/4$, and $\tau/2$, with day $d$ evaluated at $\tau$; thus the selected $\tau=2$ is a normalized diffusion horizon. Missing graph-diffusion measurements are omitted from the regularized solve, and predictions are affine-calibrated on development data. The persistence baseline averages the available selected-network readings on $d-2$ and applies a development-fitted affine calibration. Ridge regression \cite{hoerl1970ridge} and random forest \cite{breiman2001random} use the three historical monitor dates with development-fitted median imputation, missingness indicators, and sine-cosine day-of-year features; ridge additionally standardizes its inputs. Training mean and seasonal climatology use no current monitor readings, with climatology defined by 14-day calendar bins.

We report MAE, RMSE, normalized RMSE, bias, and Pearson correlation. Paired differences and their intervals are computed at full precision and then rounded, so they need not equal the difference of the rounded values reported in the tables. Normalized RMSE divides RMSE by the sample standard deviation of the evaluated target, and bias is prediction minus truth, so positive bias denotes overprediction. MAE, RMSE, and bias are reported in $\mu\mathrm{g}/\mathrm{m}^3$; normalized RMSE and correlation are dimensionless.

We estimate forecast-error uncertainty by resampling seven-day calendar bins within each year, with gaps left in place rather than compressing retained dates into seven-observation blocks \cite{kunsch1989bootstrap}. Unless stated otherwise, intervals use 1,000 paired bootstrap replicates and nominal 95\% percentiles. The fixed 30-reading comparison uses 5,000 replicates with a Bonferroni adjustment for its two contrasts. Placement comparisons include six deterministic methods and 200 distinct random ten-location networks generated with seed 7.

\section{Results}
\label{sec:results}
In what follows we report the California results in Sections~\ref{sec:target-recovery}--\ref{sec:heldout-budgets},
separating what a placement records about the prescribed target under graph diffusion from
how well it predicts future observed concentrations, and then test both questions on
independent networks in Sections~\ref{sec:prospective} and~\ref{sec:transfer}. Throughout,
model-based and held-out quantities are kept distinct, since the two do not order the
placements in the same way.

\subsection{Target Recovery Before State Identification}
\label{sec:target-recovery}

We first show that the Bay Area target requires fewer spectral directions than the complete retained state. Figure~\ref{fig:target-dimension} shows this concentration: twelve modes contain 95\% of target energy at horizon zero, nine at normalized horizon one, and six at normalized horizon four; the retained state remains 40-dimensional.

\begin{center}
\includegraphics[width=0.92\columnwidth]{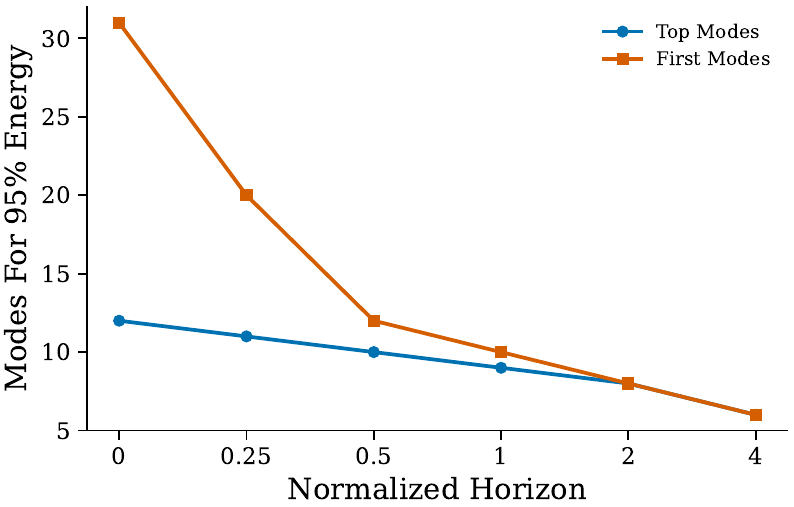}
\captionof{figure}{Spectral concentration of the Bay Area target. Top Modes gives the smallest number of target coefficients containing 95\% of target energy, while First Modes gives the number of leading low-frequency modes required for the same threshold. The complete retained state has $K=40$ modes.\repro{02-graphs.ipynb}{11}}
\label{fig:target-dimension}
\end{center}

At ten permanent locations, TAPS attains target-span error (as introduced in \eqref{etar})  below $10^{-6}$. Using a singular-value cutoff of $10^{-9}$, the measurement matrix has
\[
\operatorname{rank}(B_S)=28<40.
\]
The Bay Area target is therefore numerically recoverable even though the measurements do not identify the complete retained state.

\begin{center}
\includegraphics[width=0.9\columnwidth]{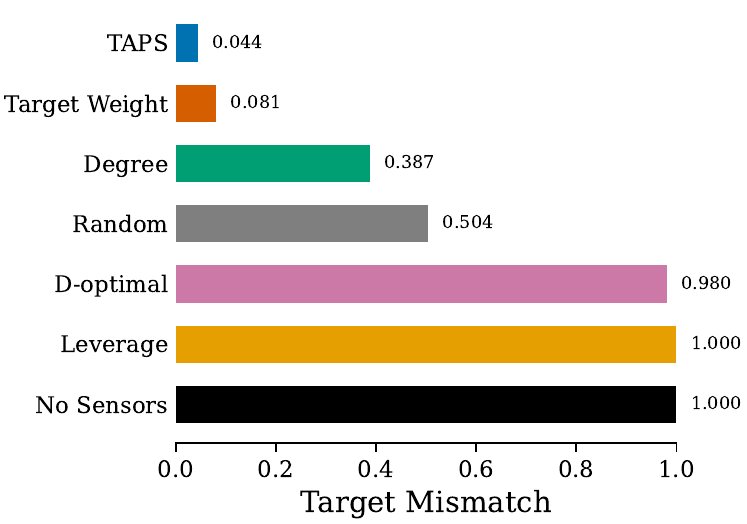}
\captionof{figure}{Target mismatch at ten permanent locations. TAPS attains mismatch $0.044$, compared with $0.081$ for target-weight ranking, $0.387$ for weighted degree, and $0.504$ averaged over 200 random networks. D-optimal and ridge-leverage placement remain near the no-sensor mismatch. Every method uses the same graph and three-observation schedule.\repro{02-graphs.ipynb}{19}}
\label{fig:ten-sensor-results}
\end{center}

Exact recovery does not by itself give a stable estimator. Figure~\ref{fig:ten-sensor-results} compares ten-location target mismatch across placements. TAPS has target mismatch $0.0440$ and amplification penalty $0.1254$. In regularized target risk the ordering is the same: target-weight placement has $r_\beta = 0.1521$ (as in \eqref{rbeta}), weighted degree $0.4388$, and the mean random network $0.5726$, against $0.1329$ for TAPS.  None of the 200 random networks matches the TAPS target mismatch.

\begin{center}
\begin{minipage}{\linewidth}
\centering
\includegraphics[width=0.96\linewidth]{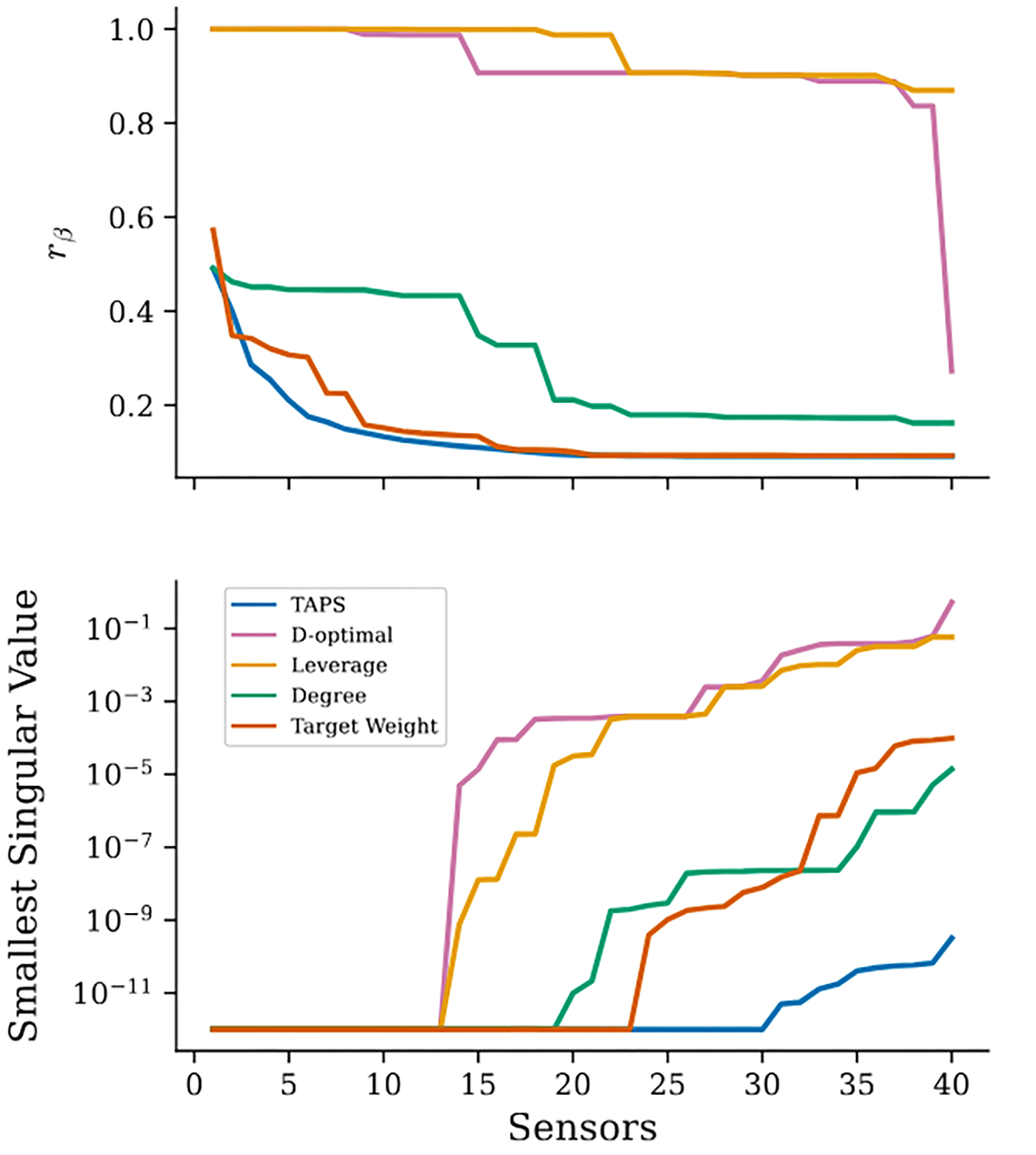}
\captionof{figure}{Target recovery and full-state conditioning across permanent-location budgets. The upper panel gives $r_\beta$, while the lower panel gives the smallest singular value of the retained-state measurement matrix. TAPS reduces $r_\beta$ at small budgets, whereas full rank does not by itself provide stable state identification.\repro{02-graphs.ipynb}{17}}
\label{fig:risk-conditioning}
\end{minipage}
\end{center}

Full rank is also distinct from stable state identification. Figure~\ref{fig:risk-conditioning} shows the resulting separation across permanent-location budgets.
D-optimal placement first reaches rank 40 at 14 locations, the counting lower bound for three readings per location,
\[
\left\lceil\frac{40}{3}\right\rceil=14.
\]
At that budget, however, $\sigma_{\min}(B_S)$ is only about $5\times10^{-6}$ and $r_\beta$ remains $0.9874$. At 40 D-optimal locations, the smallest singular value rises to $0.5231$ and $r_\beta$ falls to $0.2729$. Target recovery, algebraic full rank, and stable state identification therefore occur at different budgets.

We find the same separation across the tested bandwidths and horizons. For $K\in\{20,30,40,50\}$ and $\tau\lambda_{K-1}\in\{0.5,1,2\}$, ten-location TAPS mismatch ranges from $0.0233$ to $0.0589$ and improves on mean random placement by $81.8\%$ to $94.0\%$. The greedy target becomes exact below the full-rank counting bound in 11 of the 12 settings; Table~\ref{tab:sensitivity-summary} summarizes these ranges.

\begin{center}
\small
\renewcommand{\arraystretch}{1.10}
\begin{tabularx}{\columnwidth}{|>{\raggedright\arraybackslash}X|>{\raggedleft\arraybackslash}p{0.34\columnwidth}|}
\hline
\textbf{Quantity} & \textbf{Observed Range} \\
\hline
Ten-Location Target Mismatch & $0.0233$ to $0.0589$ \\
\hline
Ten-Location $r_\beta$ & $0.1251$ to $0.1538$ \\
\hline
Reduction Relative to Random & $81.8\%$ to $94.0\%$ \\
\hline
Greedy Exact-Target Budget & $6$ to $14$ locations \\
\hline
Below Full-Rank Lower Bound & \textbf{11 of 12 settings} \\
\hline
\end{tabularx}
\captionof{table}{Robustness of the Target-Before-State Separation. Each of the 12 settings compares TAPS with 50 random networks under the same permanent-location budget and observation schedule.\repro{02-graphs.ipynb}{21}}
\label{tab:sensitivity-summary}
\end{center}

\subsection{Repeated Observations and Permanent Sites}

We next examine how temporal reuse changes the permanent-location requirement. For $r=1,\ldots,5$, we place $r$ normalized observation times evenly over $[0,0.5]$ and recompute $\beta=0.01\,\mathrm{Tr}(B_{\mathrm{all}}^TB_{\mathrm{all}})/K$ for that schedule.

 Figure~\ref{fig:observation-count} above shows that the greedy exact-target budget falls from 33 locations with one reading per location to 15 with two, 10 with three, eight with four, and seven with five. The first added reading produces the largest ten-location reduction in mismatch, from $0.0599$ to $0.0441$; later readings change mismatch only slightly but continue to reduce the installations needed for exact recovery.
Observation timing also contributes information. With three evenly spaced readings, increasing the final normalized time from $0.1$ to $0.75$ lowers mismatch from $0.0565$ to $0.0431$ and changes the greedy exact-target budget from ten locations to nine.
This comparison concerns installation cost: the total number of measurements increases with the readings per location.

\begin{center}
\includegraphics[width=0.85\columnwidth]{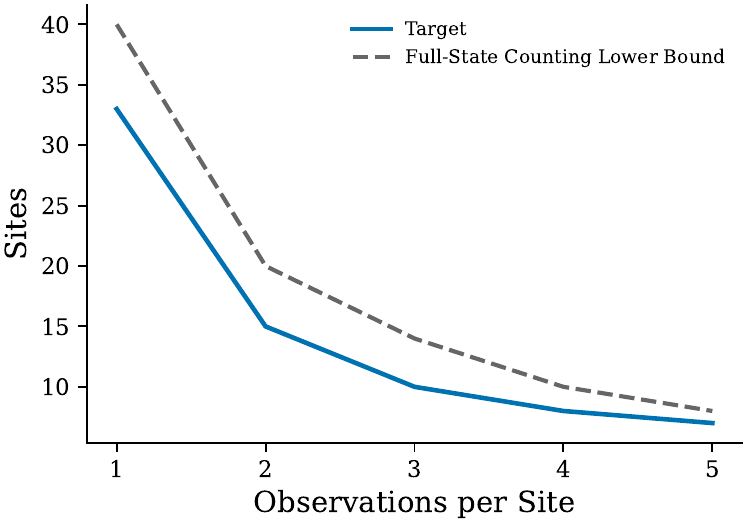}
\captionof{figure}{Repeated observations reduce the number of permanent locations required for the target. The solid Target curve gives the greedy TAPS budget for target-span error below $10^{-6}$; the dashed full-state counting lower-bound curve gives $\lceil40/r\rceil$ for $r$ readings per location. Increasing $r$ from one to five reduces the target-exact budget from 33 locations to seven.\repro{02-graphs.ipynb}{22}}
\label{fig:observation-count}
\end{center}
Because successive schedules are rescaled rather than nested and $\beta$ is recomputed for each schedule, the observed 33-to-seven trend is not a direct consequence of Proposition~\ref{prop:time-monotonicity}, which assumes nested schedules at fixed $\beta$. It also does not establish that temporal concentration is preferable under a fixed observation budget; Section~\ref{sec:heldout-budgets} tests that separate allocation problem.

\subsection{Conditional Redundancy}

We next test the mechanism behind the TAPS rule. A strong target response can add little if the installed network already represents the same information. For a candidate $v$, let
\begin{eqnarray}
    d_v(S)
:=
C_S^{-1}R_v^T
\left(I+R_vC_S^{-1}R_v^T\right)^{-1}
R_vh_S\label{dv}
\end{eqnarray}
be the Woodbury correction to the unresolved target direction for $h_S$ as in \eqref{hs}. If $C_S=F_SF_S^T$, define the correction signature
\begin{eqnarray}
   s_v(S)
:=
\begin{cases}
\dfrac{F_S^Td_v(S)}{\norm{F_S^Td_v(S)}_2},
& \norm{F_S^Td_v(S)}_2>0,\\[6pt]
0,&\text{otherwise}.
\end{cases} \label{sv}
\end{eqnarray}
The implementation uses a machine-precision floor in this normalization, so a zero correction remains the zero vector. Candidates with similar signatures reduce risk through similar target-conditioned directions.

The marginal gain of \eqref{factor} shall be denoted by
\begin{eqnarray}
    \Delta_v(S):=
r_\beta(S;g_{\tau,c})^2-r_\beta(S\cup\{v\};g_{\tau,c})^2. \label{delta}
\end{eqnarray}
 At each selection stage, let $u$ be the location chosen by TAPS. For every remaining candidate $v\neq u$, through \eqref{delta} we measure the interaction
\begin{eqnarray}
    \kappa_{uv}(S)
:=
\frac{\Delta_v(S)-\Delta_v(S\cup\{u\})}
{\Delta_u(S)+\Delta_v(S\cup\{u\})}.\label{kappa}
\end{eqnarray}
Larger $\kappa_{uv}(S)$ means that adding $u$ removes more of $v$'s subsequent marginal value. We compare $\kappa_{uv}(S)$ with the signature similarity $s_u(S)^Ts_v(S)$. Geographic, shortest-path, effective-resistance, and heat-kernel distances enter with a negative sign so that larger values denote greater similarity for every comparator.

Figure~\ref{fig:structural-results} compares correction-signature similarity with geographic, graph shortest-path, effective-resistance, and heat-kernel similarities defined by the negative corresponding distances. Across the first ten TAPS selections for four bandwidths and three horizons, giving 120 evaluated selection stages, the mean Spearman correlation between signature similarity and measured interaction is $0.889$. The correction signature is more strongly correlated with measured interaction than every tested geometry-only measure in all 120 evaluated stages.

This mechanism changes the selected locations. TAPS bypasses the strongest remaining raw-response candidate in 111 of 120 evaluated stages; whenever it does so, the chosen location has a larger finite-update correction factor. At ten locations, raw response has slightly smaller mismatch than TAPS, $0.0397$ versus $0.0440$, but a larger amplification penalty, $0.1733$ versus $0.1254$. Its $r_\beta$ is $0.1778$, a $33.8\%$ increase, and the two networks share only four locations. The finite-update correction therefore changes the physical network rather than merely reordering similar choices.
\begin{center}
\includegraphics[width=0.75\columnwidth]{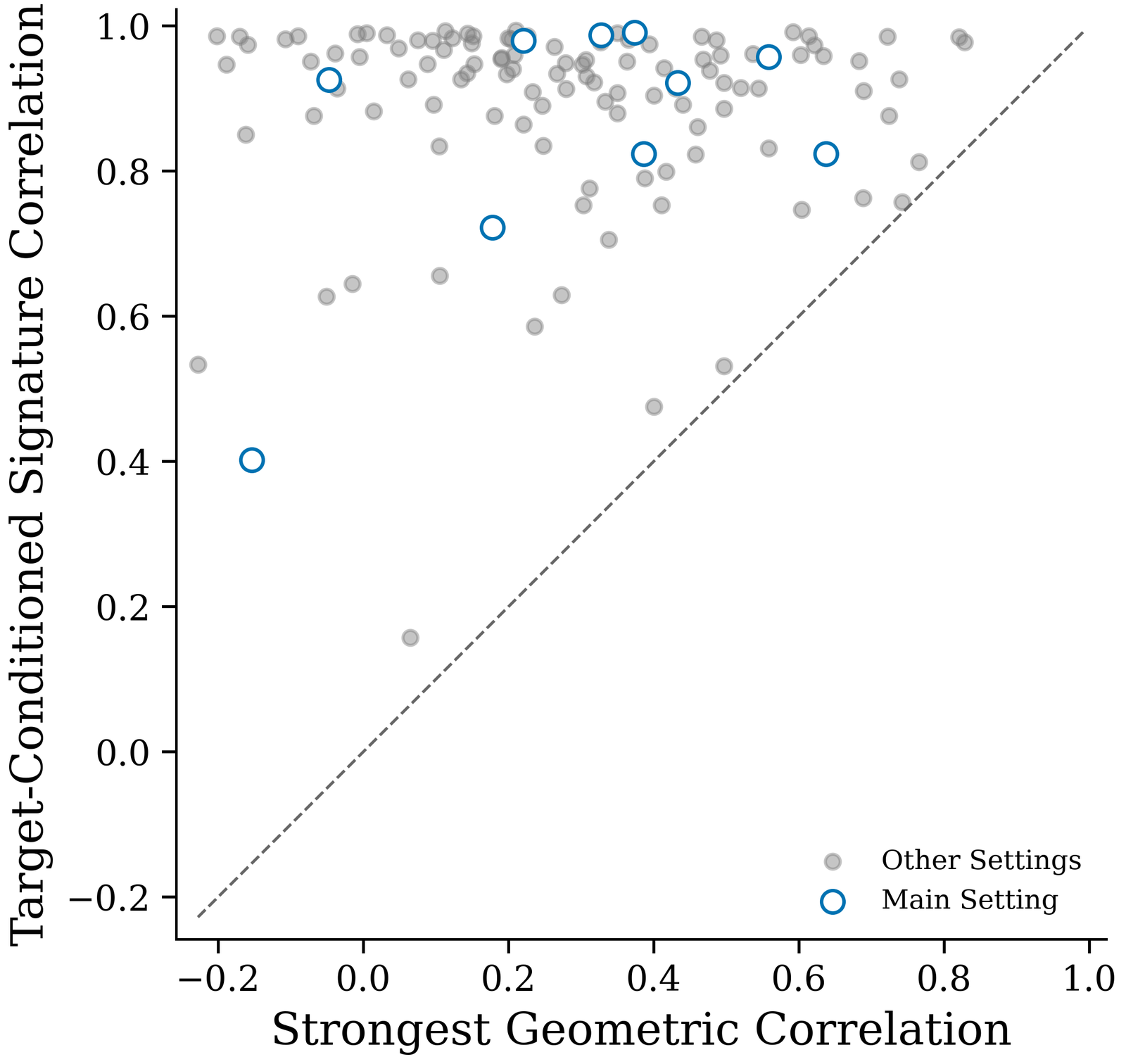}
\captionof{figure}{Target-conditioned redundancy better reflects measured interaction than geometry alone. Each point compares the Spearman correlation of measured interaction with correction-signature similarity against the strongest of four geometry-only alternatives across 120 evaluated selection stages; points above the diagonal favor the target-conditioned measure. Blue circles mark the main condition.\repro{04-bands.ipynb}{35}}
\label{fig:structural-results}
\end{center}
\paragraph{Deployment Structure.}
The selected information is geographically distributed rather than confined to the largest target weights. To summarize this structure in Figure~\ref{fig:deployment-bands}, assign each vertex the mass
\begin{equation}
\pi_v
=
\frac12\frac{A_v(\varnothing)}{\sum_{u\in V}A_u(\varnothing)}
+
\frac{1}{2n},
\label{mass}
\end{equation}
and recursively cut a minimum spanning tree with edge cost $-\log \omega_{uv}$ into three connected regions while balancing mass and station count, for $A_v(\varnothing)$ as in \eqref{av}. Each region contains at least five candidates.

\begin{center}
\includegraphics[width=0.7\columnwidth]{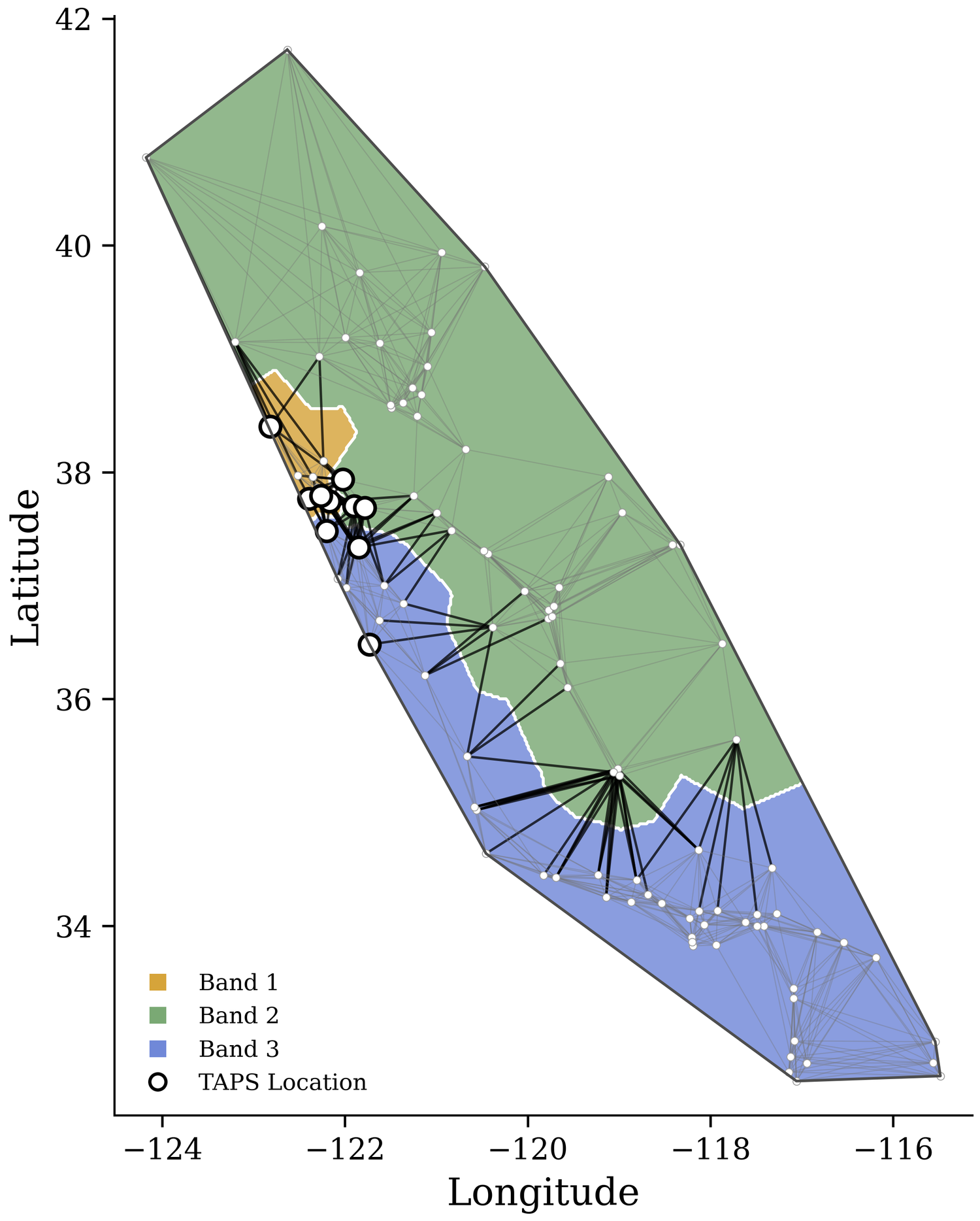}
\captionof{figure}{Connected deployment regions for the California graph. Colors show the three target-balanced bands and open circles mark TAPS locations. The colored regions are a nearest-station display of the graph partition, not interpolated PM$_{2.5}$ or physical air-shed boundaries.\repro{04-bands.ipynb}{34}}
\label{fig:deployment-bands}
\end{center}

\begin{figure*}[t]
\centering
\begin{subfigure}[t]{0.47\textwidth}
\begin{minipage}[t]{\linewidth}
\textbf{(A)}\par
\vspace{1mm}
\centering
\includegraphics[height=5.0cm,keepaspectratio]{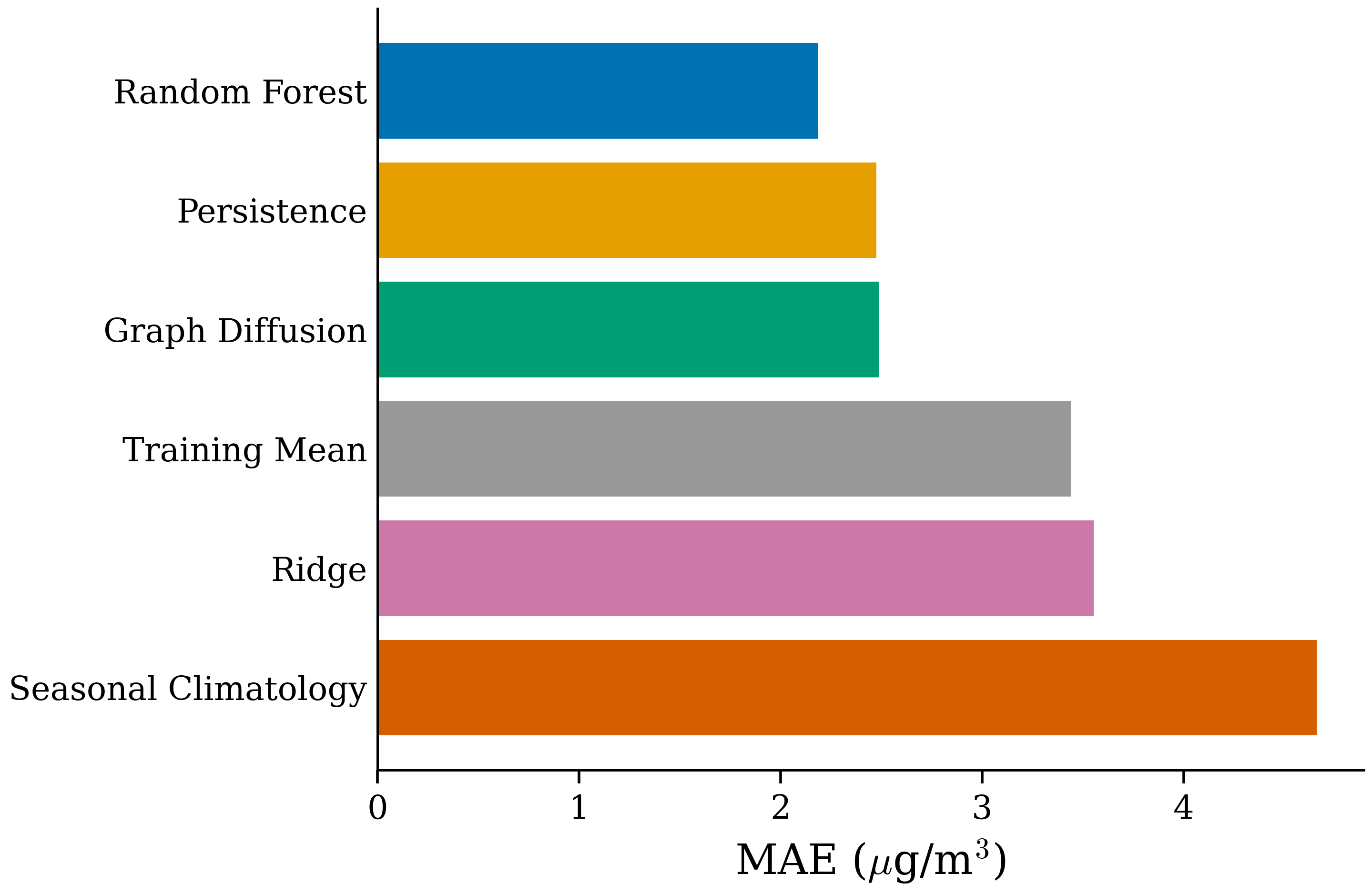}
\end{minipage}
\end{subfigure}
\hfill
\begin{subfigure}[t]{0.47\textwidth}
\begin{minipage}[t]{\linewidth}
\textbf{(B)}\par
\vspace{1mm}
\centering
\includegraphics[height=5.0cm,keepaspectratio]{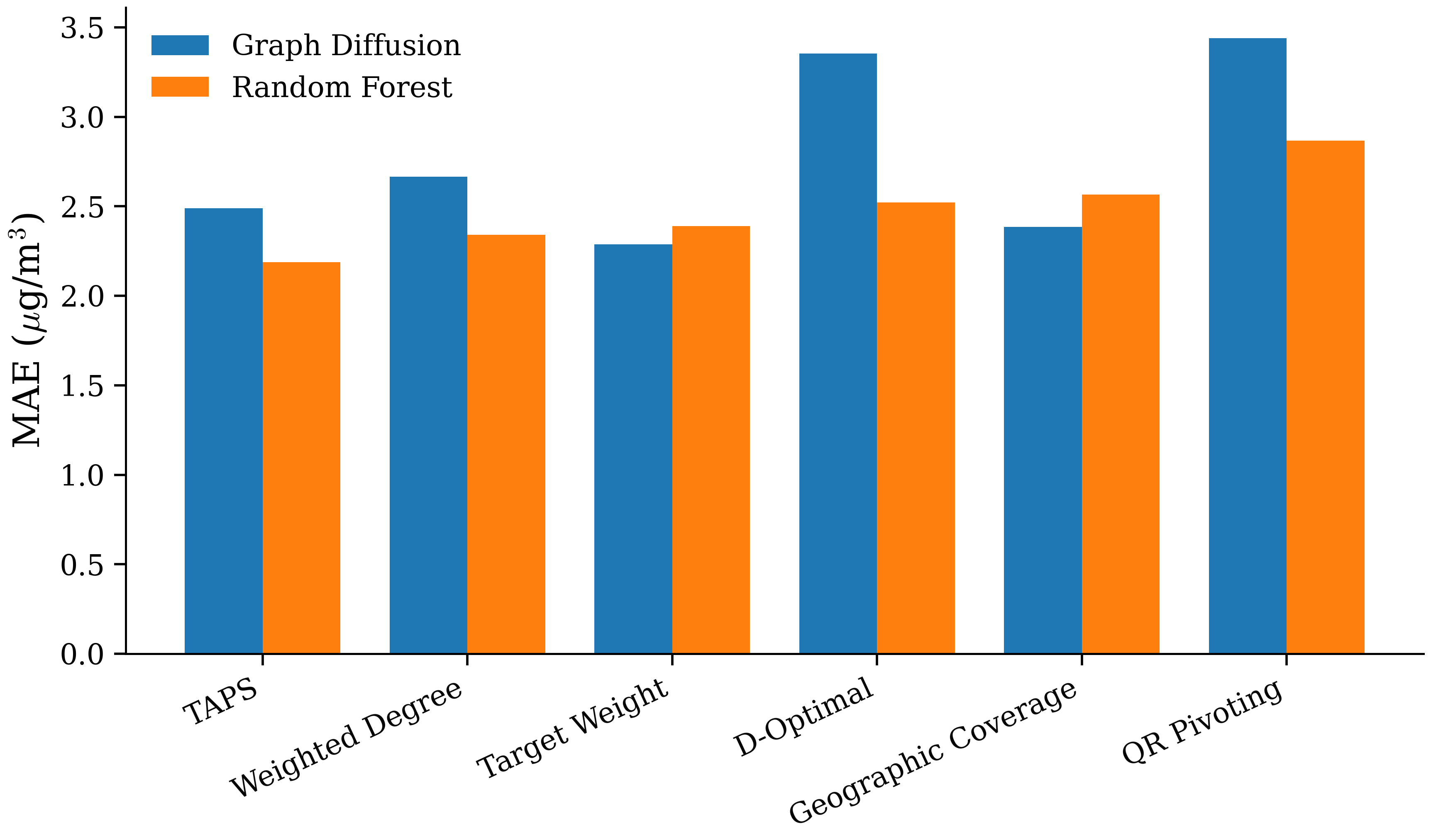}
\end{minipage}
\end{subfigure}
\caption{Held-out prediction depends on both estimator and placement. Models use 2018 to 2021 for training, 2022 for selection, 2018 to 2022 for refitting, and 135 untouched 2024 dates for evaluation. (A) MAE for six predictors on the fixed ten-location TAPS network; monitor-based predictors use observations from the selected network. (B) MAE for six ten-location placement rules under graph diffusion and random forest. TAPS and target-weight placement reverse order when only the estimator changes.\repros{05-validation.ipynb}{29 and 30}}
\label{fig:heldout-results}
\end{figure*}
The main partition contains 9, 45, and 50 stations, and TAPS allocates its ten locations as 4, 3, and 3 across the regions. Across the 12 bandwidth-horizon settings, mean node-label agreement is $0.995$ and mean adjusted Rand index is $0.990$ \cite{hubert1985partitions}. Learning a quota from the other 11 settings most often gives the same allocation of 4, 3, and 3 locations; imposing it raises $r_\beta$ by a median of $1.76\%$ and at most $7.70\%$. A 20 km spacing constraint is feasible in every setting and raises $r_\beta$ by at most $3.26\%$. Regional representation and moderate spacing can therefore be imposed at limited model cost, although spacing alone has essentially no monotone relationship with held-out error among the random networks.

\subsection{Estimator Dependence}

We next ask whether the selected measurements predict future Bay Area PM$_{2.5}$. Figure~\ref{fig:heldout-results} summarizes the estimator and placement comparisons.

On 135 untouched 2024 dates, the fixed ten-location TAPS network gives MAE $2.187\,\mu\mathrm{g}/\mathrm{m}^3$ with random forest, $2.476$ with persistence, and $2.490$ with graph diffusion. Relative to graph diffusion, random forest lowers MAE by $0.302\,\mu\mathrm{g}/\mathrm{m}^3$ (95\% CI $[0.132,0.501]$) and RMSE by $0.294\,\mu\mathrm{g}/\mathrm{m}^3$ (95\% CI $[0.111,0.532]$).

The predictor ordering is not universal. On the 2023 robustness data, persistence has the lowest MAE and the pairwise differences among persistence, graph diffusion, and random forest are not consistently resolved. We therefore identify random forest as the strongest predictor on the primary 2024 endpoint, not as a generally superior forecasting model. Table~\ref{tab:heldout-models} reports the complete error profile.

\begin{table*}[t]
\centering
\small
\setlength{\tabcolsep}{5pt}
\renewcommand{\arraystretch}{1.10}
\begin{tabular}{|l|r|r|r|r|r|}
\hline
\textbf{Model} & \textbf{MAE} & \textbf{RMSE} & \textbf{nRMSE} & \textbf{Bias} & \textbf{Correlation} \\
\hline
Random Forest & \textbf{2.187} & \textbf{2.763} & \textbf{0.899} & 0.280 & \textbf{0.443} \\
\hline
Persistence & 2.476 & 3.041 & 0.989 & 1.086 & 0.375 \\
\hline
Graph Diffusion & 2.490 & 3.057 & 0.994 & 1.065 & 0.357 \\
\hline
Training Mean & 3.439 & 4.012 & 1.305 & 2.591 & \textemdash{} \\
\hline
Ridge & 3.555 & 4.066 & 1.322 & 2.657 & 0.249 \\
\hline
Seasonal Climatology & 4.661 & 5.866 & 1.908 & 2.846 & $-0.245$ \\
\hline
\end{tabular}
\caption{Prediction from the Fixed Ten-Location TAPS Network. Graph diffusion, ridge, and random forest use all three historical monitor dates; persistence uses the latest selected-network readings, while training mean and seasonal climatology use no current monitor inputs. MAE, RMSE, and bias are reported in $\mu\mathrm{g}/\mathrm{m}^3$; nRMSE and correlation are dimensionless.\repro{05-validation.ipynb}{20}}
\label{tab:heldout-models}
\end{table*}

Changing only the placement produces the same estimator dependence. Under graph diffusion, target-weight placement has MAE $2.287$, compared with $2.490$ for TAPS; the target-weight-minus-TAPS difference is $-0.203\,\mu\mathrm{g}/\mathrm{m}^3$ (95\% CI $[-0.395,-0.0005]$), while the RMSE difference is unresolved. Under random forest, TAPS has MAE $2.187$, compared with $2.389$ for target weight. The TAPS-minus-target-weight differences are $-0.201\,\mu\mathrm{g}/\mathrm{m}^3$ for MAE (95\% CI $[-0.317,-0.075]$) and $-0.162\,\mu\mathrm{g}/\mathrm{m}^3$ for RMSE (95\% CI $[-0.275,-0.041]$). The same two networks therefore reverse order when only the estimator changes.

These deterministic placements are not typical random networks. Conditional on the fixed 2024 outcomes, TAPS under random forest outperforms 194 of 200 random ten-location networks, or $97.0\%$ (exact binomial interval $93.6\%$ to $98.9\%$). Under graph diffusion, target-weight placement outperforms 180 of 200 random networks, or $90.0\%$ (interval $85.0\%$ to $93.8\%$). These intervals describe sampling over the random-placement distribution, not variation across future years. We do not treat the best random network as a prospective competitor because it can be identified only after observing the 2024 outcomes.

\subsection{Space-Time Allocation}
\label{sec:heldout-budgets}

We find the same dependence across permanent-location budgets. Figure~\ref{fig:heldout-budget-curves} shows the complete three-through-30-location curves. Under random forest, ten-location TAPS has MAE $2.187\,\mu\mathrm{g}/\mathrm{m}^3$, and no tested non-TAPS placement from three through 30 locations has lower observed error; weighted degree at six locations is closest, with MAE $2.210$. This is a post hoc comparison across methods and budgets. When the competing method and budget are reselected within each seven-day block-bootstrap replicate, the TAPS-minus-competitor difference is $-0.022\,\mu\mathrm{g}/\mathrm{m}^3$ (95\% CI $[-0.109,0.164]$). TAPS is therefore competitive through 30 locations, but the data do not establish a minimum forecasting budget. Graph diffusion produces a different placement ordering.

\begin{figure*}[t]
\centering
\includegraphics[width=0.96\textwidth]{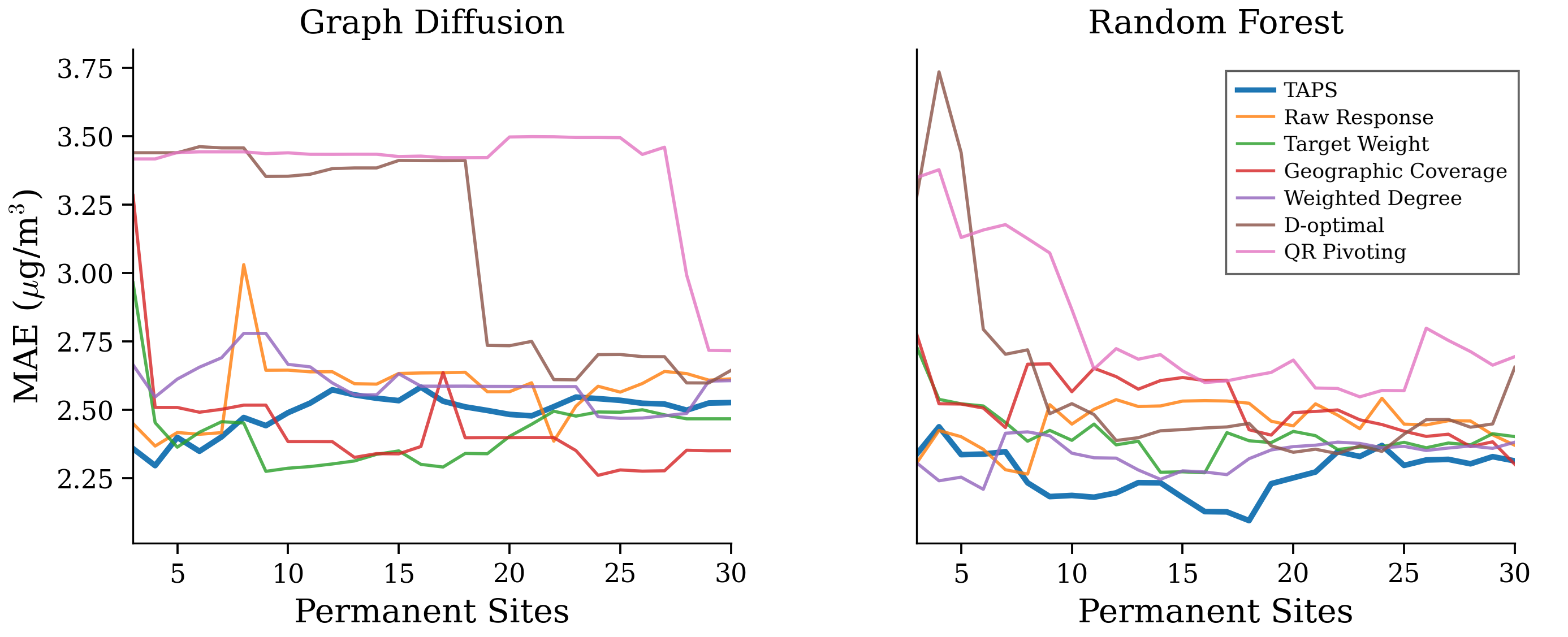}
\caption{The preferred placement changes with the estimator across sensor budgets. Each curve follows one nested placement sequence from three through 30 locations under the fixed California held-out protocol. The left panel uses graph diffusion and the right panel uses random forest. These test-set curves describe observed 2024 performance, not validation-selected sensor requirements.\repro{09-budgets.ipynb}{16}}
\label{fig:heldout-budget-curves}
\end{figure*}

We finally hold the number of scheduled readings fixed and change their allocation across space and time. Table~\ref{tab:equal-reading-budget} compares 30 locations observed once, 15 observed twice, and ten observed three times, rebuilding TAPS for each schedule. Missing monitor values mean that equal scheduled budgets need not give exactly equal realized inputs.

\begin{center}
\small
\setlength{\tabcolsep}{3pt}
\renewcommand{\arraystretch}{1.10}
\begin{tabular}{|l|r|r|r|}
\hline
\textbf{Design} & \shortstack{\textbf{Graph}\\\textbf{MAE}} & \shortstack{\textbf{RF}\\\textbf{MAE}} & $r_\beta$ \\
\hline
$30\times1$ & \textbf{2.272} & 2.349 & \textbf{0.0114} \\
\hline
$15\times2$ & 2.317 & 2.383 & 0.0121 \\
\hline
$10\times3$ & 2.490 & \textbf{2.187} & 0.0138 \\
\hline
\end{tabular}
\captionof{table}{Space-Time Allocation at a Fixed Budget of 30 Scheduled Readings. A design $m\times r$ uses $m$ permanent locations and $r$ readings per location. MAE is evaluated on the untouched 2024 California target in $\mu\mathrm{g}/\mathrm{m}^3$; $r_\beta$ is the TAPS regularized target risk for the corresponding schedule.\repros{09-budgets.ipynb}{25 and 29}}
\label{tab:equal-reading-budget}
\end{center}

For each predictor, the two contrasts of $10\times3$ against $30\times1$ and $15\times2$ use the two-comparison Bonferroni-adjusted intervals described above. Graph diffusion favors spatial breadth. The $30\times1$ design has MAE $2.272$, compared with $2.490$ for $10\times3$; the $10\times3$ minus $30\times1$ interval is $[0.044,0.409]$, and the $10\times3$ minus $15\times2$ interval is $[0.028,0.331]$.

Random forest reverses this ordering. The $10\times3$ design has MAE $2.187$, compared with $2.349$ for $30\times1$ and $2.383$ for $15\times2$. Its advantage over $15\times2$ is resolved, with interval $[-0.393,-0.003]$, whereas the comparison with $30\times1$, $[-0.338,0.029]$, remains unresolved. Thus repeated observations can reduce physical infrastructure when installations are the constrained resource, but the best allocation at a fixed measurement budget depends on the estimator.

\subsection{Blind Prospective Placement}
\label{sec:prospective}

We next consider the greenfield setting in which permanent locations must be chosen before candidate sites have produced any ground-monitor PM$_{2.5}$ history. Satellite covariates have previously informed PM$_{2.5}$ placement directly \cite{changsilva2025satellite}; here we use exogenous environmental information to construct the graph on which TAPS operates.

We generate four 100-location candidate universes by uniform, population-weighted, spatially stratified, and clustered sampling, and use the spatially stratified universe as the primary design. Environmental signatures use monthly median MAIAC $0.55\,\mu\mathrm{m}$ aerosol optical depth \cite{maiacMCD19A2}, monthly ERA5 temperature, dew point, pressure, wind components, and precipitation \cite{era5}, and USGS 3DEP elevation \cite{usgs3dep}. Each feature coordinate is standardized across candidates; the AOD and meteorological blocks are additionally scaled by the square root of their respective dimensions. Missing AOD values are left missing and pairwise environmental distances are computed from the available coordinates without imputation.

The graph uses the union of symmetric geographic 10-nearest-neighbor edges and the geographic minimum spanning tree. If $d_{uv}$ and $e_{uv}$ denote geographic and environmental distance, respectively, in lieu of \eqref{omegacal}, we have
\[
\omega_{uv}
=
\exp\!\left[-\left(\frac{d_{uv}}{\sigma_d}\right)^2\right]
\exp\!\left[-\left(\frac{e_{uv}}{\sigma_e}\right)^2\right],
\]
where $\sigma_d$ and $\sigma_e$ are the median geographic and environmental distances over the selected edges. For the primary spatially stratified graph, $\sigma_d=115.54$ km and $\sigma_e=1.5398$. ESA WorldCover supplies the land mask \cite{worldcover2021}, while GHSL population data determine candidate and target weights \cite{ghsl2023}. We fix $K=30$, normalized observation times $\{0,0.25,0.5\}$, target horizon $1$, and regularization scale $0.01$ (recall \eqref{betacal}). No historical ground-monitor PM$_{2.5}$ from the hypothetical locations enters placement.

\begin{figure*}[t]
\centering
\includegraphics[width=1.0\textwidth]{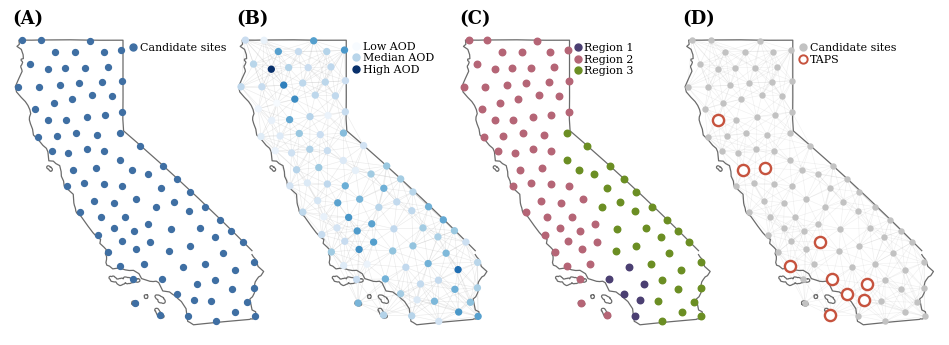}
\caption{Prospective TAPS placement without candidate-site PM$_{2.5}$ histories. (A) One hundred spatially stratified candidate locations. (B) Environmental-similarity graph constructed from multi-date MAIAC aerosol optical depth, ERA5 meteorology, USGS elevation, and geographic separation \cite{maiacMCD19A2,era5,usgs3dep}. (C) Connected target-balanced regions. (D) Ten locations selected by TAPS. Population and land constraints are derived from GHSL and ESA WorldCover \cite{ghsl2023,worldcover2021}.\repro{07-prospective.ipynb}{25}}
\label{fig:prospective-pipeline}
\end{figure*}

\begin{figure*}[t]
\centering
\includegraphics[width=1.0\textwidth]{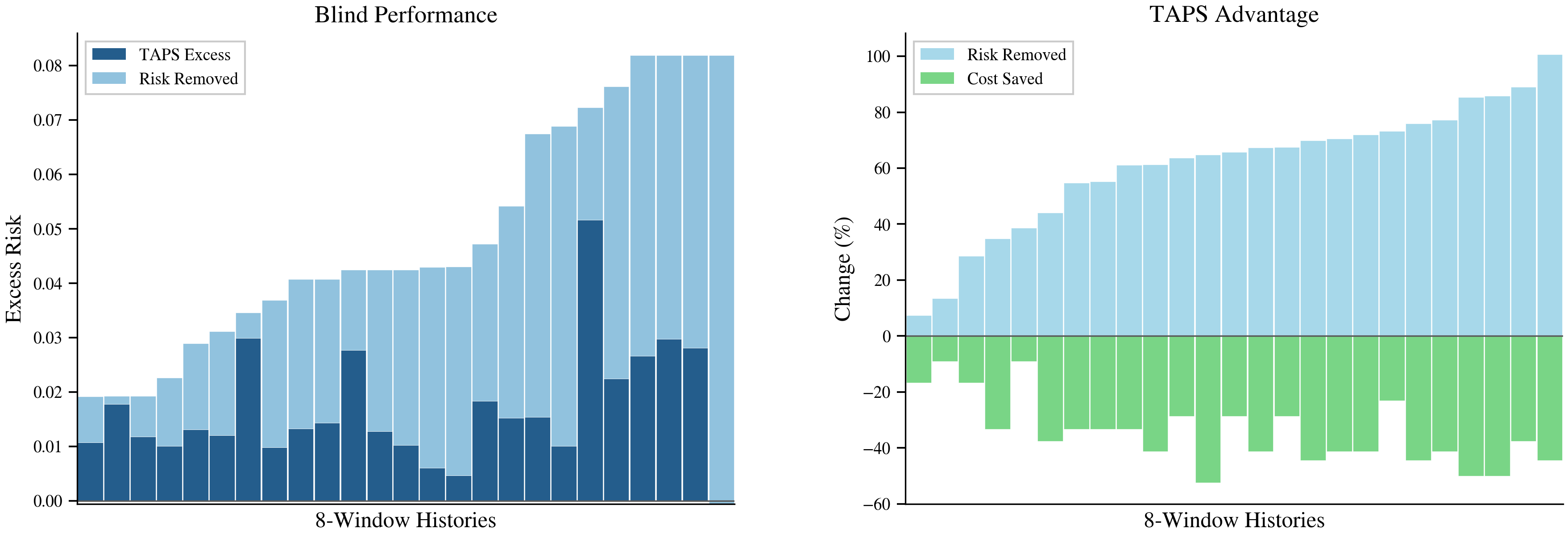}
\caption{Blind prospective performance under 25 eight-window pre-deployment histories. The practical baseline is the lower-$r_\beta$ ten-location design between target weight and raw response on the withheld 2025 model. Left: dark bars show signed blind-TAPS $r_\beta$ excess above the 2025-only oracle-information greedy benchmark, while light bars show the baseline-to-TAPS difference in $r_\beta$. Right: upper bars show the fraction of baseline $r_\beta$ excess removed by TAPS, while lower bars show the corresponding reduction in matching permanent-monitor count and annualized monitor cost under the linear EPA benchmark. TAPS outperforms the practical baseline in all 25 histories, removing a median 65.7\% of its $r_\beta$ excess while reducing the matching monitor requirement by a median 37.5\%.\repro{10-blind.ipynb}{19}}
\label{fig:blind-prospective}
\end{figure*}

We test this design across 100 blind pre-deployment histories. All 2025 windows are excluded from placement. From the 20 monthly windows (June to October, 2021 to 2024), we construct 25 histories using each of 4, 8, 12, and 16 windows. TAPS and five competing methods select networks from pre-2025 information, and the frozen networks are evaluated under the same held-out 2025 model. The held-out graph is built from 2025 MAIAC and ERA5 covariates, fixed geography, and elevation, not ground-monitor PM$_{2.5}$; the target weights remain fixed. The 2025-only benchmark is the ten-location greedy TAPS design constructed directly from that held-out model, so it is an oracle-information greedy benchmark rather than a global optimum. The experiment measures sensitivity to the available pre-deployment history, not variation across 100 independent future years.

TAPS has the lowest $r_\beta$ among all tested ten-location designs in 92 of the 100 histories. Pairwise, it outperforms target-weight placement in 95 histories, raw response in 96, and geographic coverage, D-optimal placement, and QR pivoting in all 100.

The amount of environmental history matters. With four windows, the median blind-to-benchmark $r_\beta$ ratio is $1.166$ and the 90th percentile is $2.456$. With 8 to 16 windows, the median ratios fall to $1.070$ to $1.082$; TAPS has the lowest $r_\beta$ in 73 of 75 histories, and 72 of 75 designs lie within 20\% of the 2025-only greedy benchmark. Figure~\ref{fig:blind-prospective} focuses on the 25 eight-window histories, where TAPS outperforms the lower-$r_\beta$ ten-location design between target weight and raw response in all 25. This practical baseline is a post hoc comparison envelope on the withheld model, not a placement rule chosen prospectively. For these 25 histories, the fraction of baseline excess removed is $(R_B-R_T)/(R_B-R_O)$, where $R_B$, $R_T$, and $R_O$ denote the practical-baseline, blind-TAPS, and 2025-only greedy risks, respectively; $R_B-R_O$ is positive in every case, and values above 100\% occur when blind TAPS beats the greedy benchmark. The 20\% level is descriptive rather than prespecified.

This $r_\beta$ difference also changes the required infrastructure. For each history and placement method, we search prefixes from one through 30 locations and record the first whose $r_\beta$ is no greater than the corresponding TAPS-10 $r_\beta$, up to numerical tolerance $10^{-12}$. All target-weight and raw-response cases match within this range. Their median matching budgets are 15 and 17 locations, so TAPS-10 uses 33.3\% and 41.2\% fewer permanent monitors. At the EPA benchmark of \$22,456 per continuous PM$_{2.5}$ monitor per year, the corresponding five- and seven-monitor differences have illustrative annualized equivalents of \$112,280 and \$157,192, respectively \cite{epaOIG2025monitoring}. These values translate the observed site-count differences under a linear cost assumption and are not demonstrated deployment savings.

For Figure~\ref{fig:blind-prospective}, the 37.5\% median uses one lower-$r_\beta$ comparator per eight-window history, not both comparators pooled. The 33.3\% and 41.2\% figures above instead use separate median matching budgets of 15 and 17 across all 100 histories.

\subsection{Transfer Results}
\label{sec:transfer}

We next test whether the main distinction survives on independent regulatory networks. The transfer studies ask two separate questions: whether the prescribed target remains easier to recover than the retained state, and whether low graph-model risk identifies the best held-out placement.

\paragraph{Canada.}
We construct population-weighted Vancouver and Toronto targets from National Air Pollution Surveillance observations and 2021 census data \cite{ecccNAPS,statcan2021profile,statcan2021tracts}. Candidate eligibility uses 2018 to 2022 June to October data: a station must appear in at least four seasons and have a median of at least 40 daily observations per season, leaving 55 British Columbia and 43 Ontario candidates. Graph correlations use 2018 to 2022, graph parameters are selected on 2023, and 2024 is held out. The selected British Columbia graph uses $k=8$, $q=0.5$, and $\sigma_d=104.16$ km; Ontario uses $k=8$, $q=2$, and $\sigma_d=151.52$ km. The retained dimensions are $K=23$ and $K=34$, respectively, with normalized observation times $\{0,0.25,0.5\}$, target horizon $1$, and $\beta=0.01\,\mathrm{Tr}(B_{\mathrm{all}}^TB_{\mathrm{all}})/K$. Along the TAPS prefixes, the Vancouver target becomes numerically exact at four locations and the Toronto target at eleven. At ten locations, the measurement ranks are $19<23$ and $30<34$, respectively, so neither retained state is identifiable. The counting lower bounds are $\lceil 23/3 \rceil = 8$ and $\lceil 34/3 \rceil = 12$, so both targets meet the numerical recovery criterion below the full-state counting lower bound. TAPS also outperforms all 200 random networks under $r_\beta$ in both settings.

For held-out prediction, observations on $d-4$, $d-3$, and $d-2$ predict day $d$. The observed target renormalizes its fixed weights over available reference stations and requires at least 85\% target-weight coverage. Predictor selection uses available 2018 to 2022 dates with 2023 validation, after which models are refit through 2023 for the 2024 test. This gives 147 validation and 149 test dates for Vancouver, and 138 validation and 126 test dates for Toronto. Graph diffusion omits missing measurement rows and is affine-calibrated on development data. Random forest uses median imputation with missingness indicators and sine-cosine day-of-year features; validation selects depth 4 and feature fraction $0.5$ for both targets, with minimum leaf sizes 2 for Vancouver and 5 for Toronto, and final fits use 500 trees.

Held-out prediction gives a different ordering. In Vancouver, random forest favors TAPS with MAE $1.778\,\mu\mathrm{g}/\mathrm{m}^3$, whereas graph diffusion favors weighted degree at $1.913$, compared with $1.975$ for TAPS. In Toronto, graph diffusion favors TAPS at $2.878$, while random forest favors D-optimal placement at $2.609$. Under graph diffusion, TAPS outperforms only $36\%$ of the random Vancouver networks and $13\%$ of the random Toronto networks despite outperforming all of them under $r_\beta$. The graph criterion therefore preserves target information without determining held-out forecast error.

\paragraph{England.}
England gives the clearest external target-state separation. We use Defra's Automatic Urban and Rural Network \cite{defraAURN}, with 2018 to 2022 used for qualification and graph construction, 2023 for validation, and 2024 held out. A candidate must appear in at least four 2018 to 2022 June to October seasons, have a median of at least 40 valid days per season, and map to an English ONS region; each daily PM$_{2.5}$ value requires at least 18 valid hourly observations. These rules leave 52 candidates. The population-weighted South East target represents approximately $9.278$ million residents across 1,119 Middle Layer Super Output Areas, and 17 stations receive positive target weight \cite{onsTS001,onsMSOA}.

The retained model has $K=38$ modes and effective target dimension $d_{\mathrm{eff}}(\sqrt{0.05})=10$ (see \eqref{deff}), using the same 95\% target-energy threshold as Figure~\ref{fig:target-dimension}. With three readings per location, the ten-location TAPS matrix has rank $30<38$, yet its target-span error is below $10^{-6}$ (as introduced in \eqref{etar}). Along the TAPS sequence, full-state rank is first reached at 22 locations. The counting lower bound is $\lceil38/3\rceil=13$, so the 22-location value is an achieved budget for this greedy sequence rather than a lower bound over all possible subsets.

England also exposes the limit of the greedy prefix. At ten locations, raw response has lower $r_\beta$ than TAPS, $0.1843$ versus $0.2036$, although both outperform all 200 random networks. On 100 untouched 2024 dates, the fixed TAPS network gives MAE $2.304\,\mu\mathrm{g}/\mathrm{m}^3$ with random forest, $2.325$ with persistence, and $2.428$ with graph diffusion; the random-forest advantage over graph diffusion is unresolved.

Changing the placement again changes the ranking. Under graph diffusion, raw response attains MAE $2.174$, compared with $2.428$ for TAPS, with a paired difference of $-0.254\,\mu\mathrm{g}/\mathrm{m}^3$ (95\% CI $[-0.460,-0.095]$); raw response outperforms $89\%$ of random networks, compared with $69\%$ for TAPS. Under random forest, D-optimal placement is numerically best at $2.233$, followed by raw response at $2.248$ and TAPS at $2.304$, although the D-optimal advantage over TAPS is unresolved.

The transfer results preserve the target-state separation but reject a universal placement ranking. TAPS can identify compact target-informative networks across regions, while the best held-out network remains conditional on the downstream estimator.

\section{Discussion and Outlook}
\label{sec:discussion}
\label{sec:conclusion}

TAPS separates the information needed for one prescribed future target from that needed to identify the full retained state. This distinction is visible in both California and England, where the target is recoverable before full-state identification. Repeated observations can further reduce the number of installations: under the tested rescaled California schedules, the greedy exact-target requirement falls from 33 locations with one reading each to seven with five. The marginal-gain factorization \eqref{factor} also matters operationally. Omitting its finite-update correction changes the selected network and raises California $r_\beta$, as defined in \eqref{rbeta},  by $33.8\%$ despite slightly improving mismatch.

The prospective experiment shows that the same criterion can be used before candidate sites have ground-monitor PM$_{2.5}$ histories. TAPS has the lowest tested ten-location $r_\beta$ in 92 of 100 blind histories, while target-weight and raw-response designs require median budgets of 15 and 17 locations to match TAPS-10. These model-based gains do not imply a universal forecasting ranking. California, Canada, and England all show estimator-dependent reversals, and at a fixed 30-reading budget graph diffusion favors broader spatial coverage while random forest favors repeated observations at ten locations. Placement should therefore be evaluated with the estimator and loss used downstream.

The graphs encode empirical or environmental similarity rather than atmospheric transport; the primary California forecast test contains one untouched year; and the prospective study is conditional on a fixed candidate universe without site-feasibility constraints. The greedy sequence is not proved globally optimal, and in England the raw-response ablation attains lower ten-location $r_\beta$ than TAPS; the held-out budget curves do not establish a prospectively selected minimum network. Future work should incorporate operating constraints, richer dynamics such as coupling to stochastic fire-spread models from statistical mechanics \cite{drossel1992forest,clar1996forest}, additional regulatory networks, and prospective field deployment. Code and results are available at \tapsrepo.

\section*{Declarations}

\noindent\textbf{Acknowledgements.} 
 LPS is grateful to Magdalen College and the Mathematical Institute at the University of
Oxford for their hospitality  whilst this work was completed. 

\smallbreak
\noindent\textbf{Competing interests.} The authors have no relevant financial or non-financial interests to disclose.

\noindent\textbf{Author contributions.}
Conceptualization: Michelle Lin, Laura P. Schaposnik; Methodology and formal analysis:  Michelle Lin; Software and numerical investigation:  Michelle Lin; Writing: Michelle Lin, Laura P. Schaposnik; Supervision: Laura P. Schaposnik. Both authors read and approved the final manuscript.

\smallbreak
\noindent\textbf{Funding} The research of LPS is partially supported by NSF FRG Award DMS2152107 and NSF CAREER
Award DMS 1749013, and also supported in part by grants from the NSF (DMS2235451) and
Simons Foundation (MP-TMPS-00005320) to the NSF-Simons National Institute for Theory and
Mathematics in Biology (NITMB), and by a Simons Travel Award.
 
\small
\setlength{\bibsep}{2pt plus 0.3ex}
\bibliographystyle{plainnat}
\bibliography{references}
\end{multicols}

\end{document}